\documentclass{ws-procs9x6}

\DeclareMathOperator{\Rad}{Rad}
\DeclareMathOperator{\rank}{rank}
\DeclareMathOperator{\const}{const}
\DeclareMathOperator{\tr}{tr}
\DeclareMathOperator{\ltr}{ltr}
\DeclareMathOperator{\codim}{codim}
\DeclareMathOperator{\Ker}{Ker}

\begin{document}

\title{SOME LIGHTLIKE SUBMANIFOLDS OF ALMOST \\COMPLEX MANIFOLDS WITH NORDEN METRIC}

\author{Galia Nakova}

\address{University of Veliko Tarnovo "St. Cyril and St. Metodius"\\
Faculty of Mathematics and Informatics,\\
T. Tarnovski 2 str.,\\
5000 Veliko Tarnovo, Bulgaria \\ E-mail:gnakova@gmail.com}

\begin{abstract}
In this paper we study submanifolds of an almost complex manifold with Norden metric which are non-degenerate
with respect to the one Norden metric and lightlike with respect to the other Norden metric on the manifold.
Relations between the induced geometric objects of some of these submanifolds are given. 
Examples of the considered submanifolds are constructed.\\
MSC: 53B25, 53C50, 53B50, 53C42, 53C15
\end{abstract}

\keywords{Lightlike submanifolds; Almost complex manifolds; Norden metric;}

\bodymatter

\section*{Introduction}
The general theory of lightlike submanifolds has been developed in \cite{D-B} by K. Duggal and A. Bejancu.
The geometry of Cauchy-Riemann (CR) lightlike submanifolds of indefinite Kaehler manifolds was presented in 
\cite{D-B} \,, too.
Many new types of lightlike submanifolds of indefinite Kaehler manifolds, indefinite Sasakian and 
indefinite quaternion Kaehler manifolds are introduced in \cite{D-S} by K. Duggal and B. Sahin. In \cite{D-B}
and  \cite{D-S} , different applications of lightlike geometry in the mathematical physics are given.
However, lightlike submanifolds of almost complex manifolds with Norden metric (or B-metric)
have not been considered yet. The study of such submanifolds is interesting because there exists a difference 
between the geometry of a $2n$-dimensional indefinite almost Hermitian manifold and the geometry of a
$2n$-dimensional almost complex manifold with Norden metric. The difference arises due to the fact that in
the first case, the almost complex structure $\overline J$ is an isometry with respect to the semi-Riemannian metric
$\overline g$ of index $2q  (0<2q<2n)$ and in the second case, $\overline J$ is an anti-isometry with respect to
the metric $\overline g$, which is necessarily of signature $(n,n)$. This property of the pair 
$(\overline J,\overline g)$ of an almost complex manifold with Norden metric $\overline M$ allows to define the tensor
field $\overline {\widetilde g}$ on $\overline M$ by $\overline {\widetilde g}(X,Y)=\overline g(\overline JX,Y)$, 
which is a Norden metric, too. 
\par
Let $M$ be a real $m$-dimensional submanifold of 
an almost complex manifold witn Norden metric $(\overline M,\overline J,\overline g,\overline {\widetilde g})$. 
The geometry of $M$ depends on the behaviour of the tangent bundle of $M$ with respect to the action 
of the almost complex structure $\overline J$ and the induced metric on $M$. 
Due to that there exist two Norden metrics $\overline g$ and $\overline {\widetilde g}$ of
$\overline M$ we can consider two induced metrics $g$ and $\widetilde g$ on $M$ by $\overline g$ and
$\overline {\widetilde g}$, respectively. For a submanifold $M$ of $\overline M$ three cases with respect to 
the induced metrics $g$ and $\widetilde g$ on $M$ are possible:
$M$ is non-degenerate with respect to both $g$ and $\widetilde g$;
$M$ is degenerate with respect to both $g$ and $\widetilde g$;
$M$ is non-degenerate with respect to $g$ (resp. $\widetilde g$) and degenerate with respect to 
$\widetilde g$ (resp. $g$).
\par
In this paper we consider mainly submanifolds of the third type. In Sections 1 and 2 we recall some preliminaries about
lightlike submanifolds of semi-Riemannian manifolds and almost complex manifolds with Norden metric, respectively.
The main results of the paper are given in Section 3. We prove that a necessary and sufficient condition for the
submanifold $(M,g)$ of $\overline M$ to be a CR-submanifold is $(M,\widetilde g)$ to be a Radical transversal lightlike submanifold of $\overline M$. We obtain also that the submanifold $(M,g)$ of $\overline M$ is generic, totally real
or Lagrangian if and only if $(M,\widetilde g)$ is a coisotropic, an isotropic or a totally lightlike submanifold of 
$\overline M$, respectively. In Section 4, in the case of totally real $(M,g)$ and isotropic $(M,\widetilde g)$, 
relations between the induced geometric objects are found. The structure
equations of these submanifolds of a Kaehler manifold with Norden metric are obtained. In the last section we consider
known matrix Lie subgroups of $GL(2;\mathbb{C})$ as examples of the submanifolds introduced in Section 3.

\section{Lightlike submanifolds of semi-Riemannian manifolds}
Follow \cite{D-B} we give some basic notions and formulas for lightlike submanifolds
of semi-Riemannian manifolds. 
\par
Let $(\overline M,\overline g)$ be a real $(m+n)$-dimensional semi-Riemannian manifold  $(m>1, \, n\geq 1)$, i.e.
$\overline g$ is a semi-Riemannian metric of constant 
index $q\in \{1,\ldots ,m+n-1\}$ and $M$ be a submanifold of $\overline M$ of codimension $n$. Denote by $g$ the 
induced tensor field on $M$ of $\overline g$ and suppose that ${\rank} g={\const}$ on $M$. 
If ${\rank} g=m$, i.e. $g$ is non-degenerate on the tangent bundle $TM$ of $M$, then $M$ is called {\it a non-degenerate submanifold}
of $\overline M$. In the case ${\rank} g<m$, i.e. $g$ is degenerate on $TM$, then 
$M$ is called 
{\it a lightlike submanifold} of $\overline M$. We will note an orthogonal direct sum by $\bot $ and a non-orthogonal direct sum by $\oplus $. The tangent space $T_xM$ and the normal space $T_xM^\bot $ of a non-degenerate submanifold
$M$ of $\overline M$ are non-degenerate and they are complementary orthogonal vector subspaces of $T_x\overline M$.
However, if $M$ is a lightlike submanifold of $\overline M$, both $T_xM$ and $T_xM^\bot $ are
degenerate orthogonal, but non-complementary subspaces of $T_x\overline M$ and there exists a subspace 
$
{\Rad} T_xM=T_xM \cap T_xM^\bot ={\Rad} T_xM^\bot ,
$
where
\[
{\Rad} T_xM=\{\xi _x \in T_xM: g(\xi _x,X)=0, \, \, \forall X\in T_xM\}.
\]
The dimension of ${\Rad} T_xM$ depends on $x\in M$.
The submanifold $M$ of $\overline M$ is said to be {\it an $r$-lightlike} ({\it an $r$-degenerate, an $r$-null}) 
{\it submanifold} if the mapping 
$$
{\Rad} TM: x\in M \longrightarrow {\Rad} T_xM,
$$
defines a smooth distribution on $M$ of ${\rank}$ \quad $r>0$ (it means ${\dim} ({\Rad} T_xM)=r$ for all $x\in M$).
${\Rad} TM$ is called the {\it Radical (lightlike, null) distribution} on $M$. 
\par
Let $S(TM)$ be a complementary distribution of ${\Rad} TM$ in $TM$ and $S(TM^\bot )$ be a complementary
vector bundle of ${\Rad} TM$ in $TM^\bot $. Both $S(TM)$ and $S(TM^\bot )$ are non-degenerate with respect to
$\overline g$ and the following decompositions are valid
\begin{equation}
TM={\Rad} TM \bot S(TM), \quad TM^\bot ={\Rad} TM \bot S(TM^\bot ).
\label{1.1}
\end{equation}
The distribution $S(TM)$ and the vector bundle $S(TM^\bot )$ are called a {\it screen distribution} and a
{\it screen transversal vector bundle} of $M$, respectively. Although $S(TM)$ is not unique, it is canonically
isomorphic to the factor vector bundle $TM/{\Rad} TM$. 
As $S(TM)$ is a non-degenerate vector subbundle of $T\overline M$ we have
\[
T\overline M=S(TM) \bot S(TM)^\bot ,
\]
where $S(TM)^\bot $ is the complementary orthogonal vector bundle of $S(TM)$ in $T\overline M$. As
$S(TM^\bot )$ is a vector subbundle of $S(TM)^\bot $ and since both are non-degenerate we have the 
following orthogonal direct decomposition 
\[
S(TM)^\bot =S(TM^\bot )\bot S(TM^\bot )^\bot .
\] 
Let ${\tr}(TM)$ and ${\ltr}(TM)$ be complementary (but never orthogonal) vector bundles to $TM$ in $T\overline M$
and to ${\Rad} TM$ in $S(TM^\bot )^\bot$, respectively. Then we have
\begin{equation}
\begin{array}{lll}
{\tr}(TM)={\ltr}(TM)\bot S(TM^\bot ); \\
T\overline M=TM \oplus {\tr}(TM) 
=S(TM)\bot S(TM^\bot )\bot ({\Rad} TM \oplus {\ltr}(TM)).
\end{array}
\label{1.2}
\end{equation}
The vector bundle ${\ltr}(TM)$ is called  {\it a lightlike transversal vector bundle} of $M$ and
${\tr}(TM)$ is called {\it a transversal vector bundle} of $M$.
There exists a local quasi-orthonormal basis \cite{D-B}of $\overline M$ along $M$:
\[
\{\xi _i, N_i, X_a, W_\alpha \},\, \, i\in \{1,\ldots,r\}, \, \, a\in \{r+1,\ldots,m\}, 
\, \,\alpha \in \{r+1,\ldots,n\},  
\]
where $\{\xi _i\}$ and $\{N _i\}$ are lightlike basises of ${\Rad} TM_{\vert U}$ and ${\ltr}(TM)_{\vert U}$, 
respectively;
$\{X _a\}$ and $\{W _\alpha \}$ are basises of $S(TM)_{\vert U}$ and $S(TM^\bot )_{\vert U}$.

The following possible four cases with respect to the dimension $m$ and codimension $n$ of $M$ and rank $r$ of
${\Rad} TM$ are studied:\\
\begin{arabiclist}[(4)]
\item {\it r-lightlike}, if \, $0<r<{\min} (m,n)$; \\
\item {\it coisotropic}, if \, $1\leq r=n<m$, \, $S(TM^\bot )=\{0\}$; \\
\item {\it isotropic}, if \, $1<r=m<n$, \, $S(TM)=\{0\}$; \\
\item {\it totally lightlike}, if \, $1<r=m=n$, \, $S(TM)=\{0\}=S(TM^\bot )$.
\end{arabiclist}
\section{Almost complex manifolds with Norden metric}\label{sec-2}
Let $(\overline M,\overline J,\overline g)$ be a $2n$-dimensional almost complex manifold with Norden metric 
 \cite{GB} , i.e. $\overline J$ is an almost complex structure and $\overline g$ is a metric on $\overline M$ such that:
\begin{equation}\label{1}
\overline J^2X=-X, \quad \overline g(\overline JX,\overline JY)=-\overline g(X,Y)
\end{equation}
for arbitrary differentiable vector fields $X, Y$ on $\overline M$. \\
The tensor field $\overline {\widetilde g}$ of type $(0,2)$ on $\overline M$ defined by 
\begin{equation}\label{2.1}
\overline {\widetilde g}(X,Y)=\overline g(\overline JX,Y)
\end{equation}
is a Norden metric on $\overline M$, too.
Both metrics $\overline g$ and $\overline {\widetilde g}$ are necessarily
of signature $(n,n)$. The metric $\overline {\widetilde g}$ is said to be an {\it associated metric} of $\overline M$.
The Levi-Civita connection of $\overline g$ is denoted by $\overline \nabla $.
The tensor field $F$ of type $(0,3)$ on $\overline M$ is defined by 
$F(X,Y,Z)=\overline g((\overline \nabla _X\overline J)Y,Z)$.
Let $\overline {\widetilde \nabla }$ be the Levi-Civita connection of $\overline {\widetilde g}$. Then
\begin{equation}\label{3}
\Phi (X,Y)=\overline {\widetilde \nabla }_XY-\overline \nabla _XY
\end{equation}
is a tensor field of type $(1,2)$ on $\overline M$. Since $\overline \nabla $ and $\overline {\widetilde \nabla }$ 
are torsion free we have $\Phi (X,Y)=\Phi (Y,X)$.
A classification of the almost complex manifolds with Norden metric with respect to the tensor $F$ is given in
\cite{GB} and eight classes are obtained. In \cite{GGM} these classes are characterized by conditions 
for the tensor $\Phi $, too.
\par
Analogously, as for an almost
Hermitian manifold, for an almost complex manifold with Norden metric $(\overline M,\overline J,\overline g,\overline {\widetilde g})$ the following important classes of non-degenerate submanifolds of $\overline M$
with respect to the induced metric $g$ (resp. $\widetilde g$) on the submanifold can be considered:
\begin{alphlist}[(e)]
\item $M$ is called a {\it holomorphic (or an invariant) submanifold} of $\overline M$ if 
$\overline J(T_xM)=T_xM, \, \forall x\in M$. The dimension $m$ of an invariant submanifold $M$ of 
$\overline M$ is necessarily even;
\item $M$ is called a {\it totally real (or an anti-invariant) submanifold} of $\overline M$ if
$\overline J(T_xM)\subseteq T_xM^\bot , \, \forall x\in M$. In this case ${\dim} M=m\leq n$;
\item A totally real submanifold $M$ is called {\it Lagrangian} if ${\dim} M=m=n$;
\item $M$ is called a {\it CR-submanifold} of $\overline M$ if $M$ is endowed with two complementary 
orthogonal distributions $D$ and $D^\bot $ satisfying the conditions:
$\overline J(D_x)=D_x$ and $\overline J(D_x^\bot )\subset T_xM^\bot $, \, $\forall x\in M$;
\item The CR-submanifold $M$ of $\overline M$ is called a {\it generic submanifold} if 
${\dim} D^\bot ={\codim} M=2n-m$.
\end{alphlist}
The CR-submanifold $M$ of $\overline M$ is called {\it non-trivial (proper)} if ${\dim} D>0$ and
${\dim} D^\bot >0$.
The holomorphic and the totally real submanifolds are particular cases of the class of CR-submanifolds.
Holomorphic submanifolds of almost complex manifolds with Norden metric were studied by K. Gribachev. In \cite{GGM_2} ,
\cite{MM1} and \cite{MM2}
hypersurfaces of Kaehler manifolds with Norden metric were considered. 
 
\section{Submanifolds of an almost complex manifold with Norden metric which are non-degenerate
with respect to the one Norden metric and lightlike with respect to the other Norden metric}
Let $(\overline M,\overline J,\overline g,\overline {\widetilde g})$ be a $2n$-dimensional almost complex 
manifold with Norden metric and $M$ be an $m$-dimensional submanifold immersed by $\varphi $ in $\overline M$.
For simplicity we identify for each $x\in M$ the tangent space
$T_xM$ with $\varphi \ast (T_xM)\subset T_{\varphi (x)}\overline M$. Let $g$ and $\widetilde g$ be two metrics of $M$.
We assume that the immersion $\varphi $ is an isometry with respect to both metrics $\overline g$ and
$\overline {\widetilde g}$ on $\overline M$ and we identify the metrics $g$ and $\widetilde g$
on $M$ with the induced metrics on the subspace $\varphi \ast (T_xM)$ of 
$\overline g$ and $\overline {\widetilde g}$, respectively. Hence, for any $x\in M$ we have
\[
g(X_x,Y_x)=\overline g(X_x,Y_x), \quad \widetilde g(X_x,Y_x)=\overline {\widetilde g}(X_x,Y_x), \quad
\forall X_x, Y_x \in T_xM.
\]  
We note that $TM=\bigcup \limits_{x \in M} T_xM$ is the tangent bundle of both submanifolds $(M,g)$ and
$(M,\widetilde g)$ of $\overline M$. 
We will denote: the normal bundle of $(M,g)$ and $(M,\widetilde g)$ by $TM^\bot $ and $TM^{\widetilde \bot }$, respectively;
an orthogonal direct sum with respect to $\overline g$ (resp. $\overline {\widetilde g}$)
by $\bot $ (resp. $\widetilde \bot $) and a non-orthogonal direct sum by $\oplus $ (resp. $\widetilde \oplus $).
\par
In this section we give an answer to the question what type is the submanifold $(M,\widetilde g)$ of 
$\overline M$ if $(M,g)$ belongs to one of the basic classes of non-degenerate submanifolds.
\begin{lemma}
Let $(V,J,\overline g,\overline {\widetilde g})$ be a $2n$-dimensional almost complex vector space with Norden
metric and $W$ be a $2p$-dimensional holomorphic subspace of $V$. Then for the induced metrics $g$ and 
$\widetilde g$ on $W$ of $\overline g$ and $\overline {\widetilde g}$ respectively, we have
\par
1) $g$ is non-degenerate iff $\widetilde g$ is non-degenerate;
\par
2) $g$ is degenerate iff $\widetilde g$ is degenerate.
\end{lemma}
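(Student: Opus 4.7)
The two statements are contrapositives of one another (a metric is either degenerate or non-degenerate), so it suffices to prove part (2). I would even prove only one direction and then observe the argument is symmetric.

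The plan is to exhibit an explicit bijection between the radicals of $g$ and $\widetilde g$ in $W$, realized by the almost complex structure $J$ itself. Concretely, I would define
\[
\Rad_g W=\{\xi\in W:\overline g(\xi,X)=0\ \forall X\in W\},\quad \Rad_{\widetilde g} W=\{\eta\in W:\overline{\widetilde g}(\eta,X)=0\ \forall X\in W\},
\]
and show that $J$ maps $\Rad_g W$ onto $\Rad_{\widetilde g} W$ (both being subspaces of $W$ by the assumption that $W$ is holomorphic, i.e.\ $J(W)=W$). Since $J$ is an isomorphism, this immediately gives the equivalence.

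The only computation needed is the following: if $\xi\in \Rad_g W$ and $X\in W$, then by \eqref{2.1} and the holomorphy of $W$,
\[
\overline{\widetilde g}(J\xi,X)=\overline g(J^2\xi,X)=-\overline g(\xi,X)=0,
\]
so $J\xi\in\Rad_{\widetilde g} W$. Conversely, if $\eta\in\Rad_{\widetilde g}W$ and $X\in W$ then $JX\in W$ and
\[
\overline g(J\eta,X)=\overline{\widetilde g}(\eta,X)=0,
\]
which, on replacing $X$ by $-J X$ or alternatively applying the same formula with $JX\in W$, yields that $J\eta\in\Rad_g W$ (equivalently, one shows that $J$ restricted to $\Rad_{\widetilde g}W$ lands in $\Rad_g W$, which gives the reverse inclusion).

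I do not anticipate any real obstacle: the only ingredients are the defining relations \eqref{1} and \eqref{2.1} for $(\overline J,\overline g,\overline{\widetilde g})$, and the fact that $W$ is $\overline J$-invariant, which is exactly what lets the map $J\colon\Rad_g W\to\Rad_{\widetilde g}W$ stay inside $W$. The small thing to be careful about is not to invoke non-degeneracy of $\overline g$ or $\overline{\widetilde g}$ on $V$ itself (although both are non-degenerate on $V$ with signature $(n,n)$); the whole proof is an internal statement about $W$ using only that $J(W)=W$ and $J$ intertwines $\overline g$ with $\overline{\widetilde g}$ in the stated way.
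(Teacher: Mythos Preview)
Your proposal is correct and follows essentially the same route as the paper: both arguments use the identity $\overline g(J\xi,X)=\overline{\widetilde g}(\xi,X)$ together with $J(W)=W$ to transport a nonzero radical vector for one metric to a nonzero radical vector for the other. The paper phrases this as a contradiction (assume $g$ non-degenerate and $\widetilde g$ degenerate, apply $J$), while you phrase it as an explicit bijection $J\colon\Rad_g W\to\Rad_{\widetilde g}W$; the content is the same, and your parenthetical about replacing $X$ by $-JX$ is in fact unnecessary since $\overline g(J\eta,X)=\overline{\widetilde g}(\eta,X)=0$ already gives $J\eta\in\Rad_g W$ directly.
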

\begin{proof}
1) Let $g$ be non-degenerate. We assume that $\widetilde g$ is degenerate. Then there exists 
$\xi \in W, \, \xi \neq 0$ such that $\widetilde g(\xi ,X)=0, \, \forall X\in W$. As $W$ is a holomorphic
subspace of $V$ and ${\Ker } \, J=\{0\}$ it follows that there exists $J\xi \in W, J\xi \neq 0$ such that for 
$\forall X\in W$ we have $g(J\xi ,X)=\overline g(J\xi ,X)=\overline {\widetilde g}(\xi ,X)=\widetilde g(\xi ,X)=0$.
So, we obtain that $g$ is degenerate, which is a contradiction. Analogously, we can check that if 
$\widetilde g$ is non-degenerate, then $g$ is non-degenerate, too. The truth of 2) follows from 1).
\end{proof}

\begin{theorem}
Let $(\overline M,\overline J,\overline g,\overline {\widetilde g})$ be a $2n$-dimensional
almost complex manifold with Norden metric and $M$ be a $2p$-dimensional submanifold of $\overline M$. The 
submanifold $(M,g)$ is holomorphic iff the submanifold $(M,\widetilde g)$ is holomorphic.
\end{theorem}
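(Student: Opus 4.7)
The plan is to reduce this theorem to a direct, almost immediate application of the preceding Lemma, after observing that the defining condition of a holomorphic submanifold does not see the metric at all.

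First, I would note that the holomorphicity condition $\overline J(T_xM)=T_xM$ from item (a) of Section~\ref{sec-2} is a purely linear-algebraic condition on the almost complex structure $\overline J$ and the tangent space $T_xM$; neither $\overline g$ nor $\overline{\widetilde g}$ appears in it. Hence the $\overline J$-invariance of $T_xM$ is a property of the pair $(\overline J, TM)$ and is therefore shared by $(M,g)$ and $(M,\widetilde g)$ simultaneously.

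What is not automatic is the implicit non-degeneracy requirement: the classes (a)--(e) are introduced in Section~\ref{sec-2} as classes of non-degenerate submanifolds, so calling $(M,g)$ a holomorphic submanifold silently assumes that $g$ is non-degenerate on $TM$, and similarly for $\widetilde g$. Here I would apply the Lemma pointwise. For each $x\in M$, set $V=T_x\overline M$ and $W=T_xM$; since $\overline J(W)=W$ and $\dim W=2p$, $W$ is a $2p$-dimensional holomorphic subspace of the almost complex vector space with Norden metric $(V,\overline J_x,\overline g_x,\overline{\widetilde g}_x)$. The Lemma then gives that the induced metric $g_x$ on $W$ is non-degenerate if and only if $\widetilde g_x$ is non-degenerate.

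Combining the two observations finishes the proof in both directions: if $(M,g)$ is holomorphic, then $T_xM$ is $\overline J$-invariant and $g$ is non-degenerate on $T_xM$, so by the Lemma $\widetilde g$ is non-degenerate on $T_xM$ as well, whence $(M,\widetilde g)$ is a non-degenerate submanifold with $\overline J$-invariant tangent spaces, i.e. a holomorphic submanifold; the converse is identical with the roles of $g$ and $\widetilde g$ interchanged. Because the heavy lifting is already done by the Lemma, I do not anticipate any real obstacle; the only thing to be careful about is to make the implicit non-degeneracy hypothesis explicit and to invoke the Lemma at each point of $M$ rather than just linear-algebraically at a single tangent space.
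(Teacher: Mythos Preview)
Your proposal is correct and follows essentially the same approach as the paper: both reduce the statement to assertion 1) of Lemma~3.1 applied to $V=T_x\overline M$ and $W=T_xM$ at each point. The paper's proof is a single terse sentence to this effect, while you spell out more explicitly that the $\overline J$-invariance of $T_xM$ is metric-independent and that the only nontrivial content is the transfer of non-degeneracy via the Lemma; this added explanation is helpful but does not constitute a different method.
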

\begin{proof}
The proof of the theorem follows from assertion 1) of Lemma 3.1 by replacing the space $V$ and
the subspace $W$ by the tangent bundle $T\overline M$ of $\overline M$ and  the tangent bundle $TM$ of 
the submanifolds $(M,g)$ and $(M,\widetilde g)$, respectively.
\end{proof}
Radical transversal lightlike submanifolds of indefinite Kaehler manifolds are introduced in
\cite{Sahin} by Sahin. Further, we show that such submanifolds naturally arise on an almost complex manifold 
with Norden metric $\overline M$ and they are related with CR-submanifolds of $\overline M$. First, analogously
as in \cite{Sahin} we give the following   
\begin{definition}
Let $(M,g,S(TM),S(TM^\bot ))$ be a lightlike submanifold of an almost complex manifold with Norden metric
$(\overline M,\overline J,\overline g,\overline {\widetilde g})$. We say that $M$ is a {\it Radical transversal
lightlike submanifold} of $\overline M$ if the following conditions are satisfied:
\begin{equation}
\overline J({\Rad} TM)={\ltr}(TM),
\label{3.1}
\end{equation}
\begin{equation}\label{3.2}
\overline J(S(TM))=S(TM).
\end{equation}
Moreover, we say that a Radical transversal lightlike submanifold $M$ is proper if $S(TM)\neq 0$.
\end{definition}
It is important to note:
\begin{arabiclist}[(2)]
\item Taking into account that for  an isotropic and a totally lightlike submanifold $S(TM)=0$ and Definition 3.1, it
is clear that there exists no proper Radical transversal isotropic and totally lightlike submanifold of 
$\overline M$.  
\item Contrary to the case when $M$ is a Radical transversal lightlike submanifold of an indefinite Kaehler manifold \,
\cite{Sahin} , for an almost complex manifold with Norden metric $(\overline M,\overline J,\overline g,\overline {\widetilde g})$, $M$ can be an 1-lightlike Radical
transversal lightlike submanifold of $\overline M$. Indeed, let us suppose that $(M,g)$ is an 1-lightlike Radical
transversal lightlike submanifold of $(\overline M,\overline J,\overline g,\overline {\widetilde g})$. Then there
exist basises $\{\xi \}$ and $\{N\}$ of ${\Rad} TM$ and ${\ltr}(TM)$ respectively, such that $\overline g(\xi ,N)=1$.
On the other hand, (\ref{3.1}) implies that $\overline J\xi =\alpha N\in \Gamma ({\ltr}(TM))$, 
$\alpha \in F({\ltr}(TM))$. Thus, for the function $\alpha $ we obtain
$\alpha =\overline g(\xi ,\overline J\xi )$, which is not zero.
\end{arabiclist}
\begin{proposition}
Let $(M,g)$ be a Radical transversal lightlike submanifold of an almost complex manifold with Norden metric
$(\overline M,\overline J,\overline g,\overline {\widetilde g})$. Then the distribution $S(TM^\bot )$ is holomorphic
with respect to $\overline J$.
\end{proposition}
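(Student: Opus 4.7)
The goal is to show $\overline J(S(TM^\bot))\subseteq S(TM^\bot)$; equality then comes free from $\overline J^2=-I$. The plan is to test an arbitrary section $W\in\Gamma(S(TM^\bot))$ against the three other summands of the decomposition in (\ref{1.2}),
\[
T\overline M=S(TM)\bot S(TM^\bot)\bot\bigl({\Rad}\,TM\oplus {\ltr}(TM)\bigr),
\]
and verify that $\overline JW$ is $\overline g$-orthogonal to each of $S(TM)$, ${\Rad}\,TM$, and ${\ltr}(TM)$.

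The single algebraic identity that does all the work is the self-adjointness of $\overline J$ with respect to $\overline g$: from (\ref{1}) one gets $\overline g(\overline JX,Y)=\overline g(X,\overline JY)$ (apply the anti-isometry to the pair $(\overline JX,Y)$ and use $\overline J^2=-I$). Using this, each orthogonality check $\overline g(\overline JW,Z)=0$ reduces to checking $\overline g(W,\overline JZ)=0$, i.e.\ to locating $\overline JZ$ inside one of the summands known to be orthogonal to $S(TM^\bot)$.

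I would then run the three checks in order. First, for $X\in\Gamma(S(TM))$, condition (\ref{3.2}) gives $\overline JX\in\Gamma(S(TM))$, and $S(TM^\bot)\bot S(TM)$ by (\ref{1.1})--(\ref{1.2}). Second, for $\xi\in\Gamma({\Rad}\,TM)$, condition (\ref{3.1}) gives $\overline J\xi\in\Gamma({\ltr}(TM))$; since ${\tr}(TM)={\ltr}(TM)\bot S(TM^\bot)$ by (\ref{1.2}), we have $\overline g(W,\overline J\xi)=0$. Third, for $N\in\Gamma({\ltr}(TM))$, apply $\overline J$ to (\ref{3.1}) and use $\overline J^2=-I$ to get $\overline J({\ltr}(TM))={\Rad}\,TM$, so $\overline JN\in\Gamma({\Rad}\,TM)\subset\Gamma(TM^\bot)$; since $TM^\bot={\Rad}\,TM\bot S(TM^\bot)$, again $\overline g(W,\overline JN)=0$.

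There is no real obstacle here: the statement is a bookkeeping consequence of the two defining conditions of a Radical transversal lightlike submanifold combined with the Norden symmetry of $\overline J$. The only point that deserves care is the final step, where one must remember to derive $\overline J({\ltr}(TM))={\Rad}\,TM$ from (\ref{3.1}) rather than assume it; after that, concluding $\overline J(S(TM^\bot))=S(TM^\bot)$ (and hence the holomorphy of the screen transversal distribution) is immediate.
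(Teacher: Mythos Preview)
Your proof is correct and follows essentially the same route as the paper's. The paper introduces projection morphisms $P,Q,L,S$ onto $S(TM),{\Rad}\,TM,{\ltr}(TM),S(TM^\bot)$ and writes $\overline JW=PtW+QtW+LfW+SfW$, then runs exactly your three orthogonality checks $\overline g(W,\overline JX)=\overline g(\overline JW,X)$ for $X$ ranging over $S(TM)$, ${\Rad}\,TM$, ${\ltr}(TM)$, concluding $PtW=QtW=LfW=0$ by non-degeneracy of $\overline g$ on $S(TM)$ and on ${\Rad}\,TM\oplus{\ltr}(TM)$; your argument is the same computation without the projection notation.
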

\begin{proof}
As the tangent bundle $TM$ and the transversal vector bundle ${\tr}(TM)$ of $(M,g)$ are complementary (but not orthogonal
with respect to $\overline g$) vector subbundles of $T\overline M$, for any $X\in \Gamma (TM)$ and any 
$V\in \Gamma ({\tr}(TM))$, we have
\begin{equation}
\overline JX=TX+FX ; \qquad \overline JV=tV+fV,
\label{3.3}
\end{equation}
where $TX, tV$ belong to $\Gamma (TM)$ and $FX, fV$ belong to $\Gamma ({\tr}(TM))$. Then $T$ and $f$ are endomorphisms
on $TM$ and ${\tr}(TM)$, respectively; $F$ and $t$ are transversal bundle-valued 1-form on $TM$ and tangent 
bundle-valued 1-form on ${\tr}(TM)$, respectively. According to the decompositions (\ref{1.1}), (\ref{1.2}),
we can write any $X\in \Gamma (TM)$ and any $V\in \Gamma ({\tr}(TM))$ in the following manner
\begin{equation}
X=PX+QX ; \qquad V=LV+SV,
\label{3.4}
\end{equation}
where $P$ and $Q$ are the projection morphisms of $TM$ on $S(TM)$ and ${\Rad} TM$, respectively; $L$ and $S$ are
the projection morphisms of ${\tr}(TM)$ on ${\ltr}(TM)$ and $S(TM^\bot )$, repectively. So, for any 
$W\in \Gamma S(TM^\bot)$ we have
\begin{equation}
\overline JW=tW+fW=PtW+QtW+LfW+SfW.
\label{3.5}
\end{equation}
Now, if $X\in \Gamma S(TM)$, $\xi \in \Gamma ({\Rad} TM)$, $N\in \Gamma ({\ltr}(TM))$, using (\ref{1.2}), (\ref{3.1}),
(\ref{3.2}), (\ref{3.5}) and $\overline J({\ltr}(TM))={\Rad} TM$, we compute
\begin{equation}
\begin{array}{lll}
0=\overline g(W,\overline JX)=\overline g(\overline JW,X)=\overline g(PtW,X), \\
0=\overline g(W,\overline J\xi )=\overline g(\overline JW,\xi )=\overline g(LfW,\xi ), \\
0=\overline g(W,\overline JN)=\overline g(\overline JW,N)=\overline g(QtW,N).
\end{array}
\label{3.6}
\end{equation}
Since $\overline g$ is non-degenerate on $S(TM)$ and ${\Rad} TM\oplus {\ltr}(TM)$, from (\ref{3.6})
it follows that $PtW=LfW=QtW=0$. Then (\ref{3.5}) becomes $\overline JW=SfW$, which means that $S(TM^\bot )$ is
holomorphic.
\end{proof}

\begin{theorem}
Let $(\overline M,\overline J,\overline g,\overline {\widetilde g})$ be a $2n$-dimensional
almost complex manifold with Norden metric and $M$ be an $m$-dimensional submanifold of $\overline M$. The 
submanifold $(M,g)$ is a CR-submanifold with an $r$-dimensional totally real distribution $D^\bot $ iff 
$(M,\widetilde g)$ is an $r$-lightlike Radical transversal lightlike submanifold. 
\end{theorem}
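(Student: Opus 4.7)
The key identity underlying both directions is
\[
\widetilde g(X,Y)=\overline g(\overline J X,Y),
\]
which, combined with the Norden property $\overline g(\overline J X,\overline J Y)=-\overline g(X,Y)$, converts $\widetilde g$-orthogonal complements into $\overline J$ applied to $\overline g$-orthogonal complements. I will use this identity throughout, together with Lemma~3.1 and the defining conditions (\ref{3.1}), (\ref{3.2}).

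For the forward implication, assume $(M,g)$ is a CR-submanifold with $TM=D\bot D^\bot$, $\overline J(D)=D$, $\overline J(D^\bot)\subset TM^\bot$ and $\dim D^\bot=r$. I first identify the $\widetilde g$-radical: for $\xi\in D^\bot$ and $Y\in TM$ the identity gives $\widetilde g(\xi,Y)=\overline g(\overline J\xi,Y)=0$ since $\overline J\xi\in TM^\bot$, so $D^\bot\subseteq{\Rad} TM$; the reverse inclusion follows because $\widetilde g|_D$ is non-degenerate by Lemma~3.1 (with $D$ holomorphic and $g|_D$ non-degenerate), while the same identity shows $D$ is $\widetilde g$-orthogonal to $D^\bot$. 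I then declare $S(TM):=D$, which makes (\ref{3.2}) immediate, and propose ${\ltr}(TM):=\overline J(D^\bot)$. To see that this is a valid lightlike transversal bundle I will verify: $\overline J(D^\bot)\cap TM=\{0\}$, using $\overline J(D^\bot)\subset TM^\bot$ and non-degeneracy of $g$; the pairing $\widetilde g(\overline J\xi,\eta)=-g(\xi,\eta)$ of $\overline J(D^\bot)$ with $D^\bot$ is non-degenerate, since $g|_{D^\bot}$ is; and $\widetilde g(\overline J\xi,\overline J\eta)=-\widetilde g(\xi,\eta)=0$ on the radical, so $\overline J(D^\bot)$ is $\widetilde g$-totally lightlike.

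For the converse, let $(M,\widetilde g)$ be an $r$-lightlike Radical transversal lightlike submanifold, and set $D:=S(TM)$ and $D^\bot:={\Rad} TM$. Condition (\ref{3.2}) gives $\overline J(D)=D$. The $g$-orthogonality of $D$ and $D^\bot$ follows from $-g(X,\xi)=\widetilde g(\overline J X,\xi)=0$ (using $\overline J X\in D\subset TM$ and $\xi\in{\Rad} TM$), and the inclusion $\overline J(D^\bot)\subseteq TM^\bot$ from $g(\overline J\xi,X)=\widetilde g(\xi,X)=0$ for $X\in TM$. These together produce the CR-structure once we know $(M,g)$ is non-degenerate.

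The main obstacle is establishing non-degeneracy of $g$ on $TM$. Since $g|_D$ is non-degenerate by Lemma~3.1 and $D$, $D^\bot$ are $g$-orthogonal, this reduces to non-degeneracy of $g|_{D^\bot}$. My plan here is: take $\xi\in D^\bot$ with $g(\xi,\eta)=0$ for all $\eta\in D^\bot$; the identity gives $\widetilde g(\overline J\xi,\eta)=-g(\xi,\eta)=0$ for all $\eta\in{\Rad} TM$; by (\ref{3.1}) we have $\overline J\xi\in{\ltr}(TM)$, and the non-degeneracy of the $\widetilde g$-pairing between ${\Rad} TM$ and ${\ltr}(TM)$, which is built into the definition of the lightlike transversal bundle, forces $\overline J\xi=0$ and hence $\xi=0$. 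With $(M,g)$ non-degenerate, $D$ is holomorphic of dimension $m-r$ and $D^\bot$ is totally real of dimension $r$, exactly the required CR-structure.
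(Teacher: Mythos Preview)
Your plan is correct and follows essentially the same route as the paper's proof: the same key identity $\widetilde g(X,Y)=\overline g(\overline JX,Y)$ and the same identifications $S(TM)=D$, ${\Rad}\,TM=D^\bot$, ${\ltr}(TM)=\overline J(D^\bot)$. The paper carries out the forward direction more fully by first identifying $TM^{\widetilde\bot}=D^\bot\bot(\overline JD^\bot)^\bot$ and taking $S(TM^{\widetilde\bot})=(\overline JD^\bot)^\bot$ (after checking this bundle is holomorphic), which you should also do to legitimize your choice of ${\ltr}(TM)$; in the converse the paper deduces non-degeneracy of $g$ on $TM$ indirectly via the orthogonal splitting $T\overline M=TM\bot{\tr}(TM)$ (using Proposition~3.1), whereas your direct pairing argument for $g|_{D^\bot}$ is a clean and equally valid alternative.
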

\begin{proof}
As far as we know, CR-submanifolds of almost complex manifolds with Norden metric are not studied. Therefore,
first we give some preliminaries about them. 
\par 
Let $(M,g)$ be a CR-submanifold of $\overline M$ and assume that it is not generic.  
Hence, $(M,g)$ is endowed with two complementary orthogonal with respect to 
$\overline g$ distributions $D ({\dim} D=2p)$ and 
$D^\bot ({\dim} D^\bot =r: 1\leq r< {\min}(m,2n-m))$ such that $\overline JD=D, \, \overline JD^\bot \subset TM^\bot $.
Following \, \cite{Y-K} , for any $X\in \Gamma (TM)$ we have
\begin{equation}
\overline JX=TX+FX ,
\label{3.7}
\end{equation}
where $TX$ is the tangential part of $\overline JX$ and $FX$ is the normal part of $\overline JX$. Then $T$ is an
endomorphism on the tangent bundle $TM$ and $F$ is a normal bundle-valued 1-form on $TM$. Similarly, for any 
$V\in \Gamma (TM^\bot )$ we have
\begin{equation}
\overline JV=tV+fV ,
\label{3.8}
\end{equation}
where $tV$ is the tangential part of $\overline JV$ and $fV$ is the normal part of $\overline JV$. Then $f$ is an
endomorphism on the normal bundle $TM^\bot $ and $t$ is a tangent bundle-valued 1-form on $TM^\bot $. If we denote
by $P$ and $Q$ the projection morphisms of $TM$ on $D$ and $D^\bot $ respectively, for any $X\in \Gamma (TM)$ we
can write
\begin{equation}
X=PX+QX,
\label{3.9}
\end{equation} 
where $PX\in \Gamma (D)$ and $QX\in \Gamma (D^\bot )$.
Using (\ref{3.7}) and (\ref{3.8}), for $Y\in \Gamma (TM)$ and $U\in \Gamma (TM^\bot )$ we compute
$\overline g(\overline JX,Y)=\overline g(TX,Y)$ and $\overline g(\overline JV,U)=\overline g(fV,U)$. Since the
almost complex structure $\overline J$ is an anti-isometry with respect to the Norden metrics $\overline g$ and
$\overline {\widetilde g}$, we obtain that $T$ and $f$ are self-adjoint operators on $TM$ and
$TM^\bot $ with respect to both metrics. We note that $T$ and $f$ are skew self-adjoint \,
\cite{Y-K} when $\overline M$ is an almost Hermitian manifold. We also find $\overline g(FX,V)=\overline g(X,tV)$. 
Moreover, applying $\overline J$ to the equality (\ref{3.9}) we obtain
\begin{equation}
\overline JX=\overline JPX+\overline JQX,
\label{3.10}
\end{equation} 
where $\overline JPX\in \Gamma (D)$ and $\overline JQX\in \Gamma (\overline JD^\bot )$. From 
(\ref{3.7}) and (\ref{3.10}) it follows that $T=\overline JP$ and $F=\overline JQ$.
Due to the fact that $D^\bot $ is non-degenerate with respect to $\overline g$, there exists an orthonormal basis 
$\{\xi _1,\ldots,\xi _r\}$ of $D^\bot $, i.e.
\begin{equation}
g(\xi _i,\xi _i)=\epsilon _i, \, \, \epsilon _i=\pm 1; \quad g(\xi _i,\xi _j)=0, \, \, i\neq j; 
\quad i,j=1,2,\ldots,r. 
\label{3.11}
\end{equation}
Since ${\Ker }\overline J=\{0\}$, $\{\overline J\xi _1,\ldots,\overline J\xi _r\}$ is a 
basis of $\overline JD^\bot $. Moreover, it is an orthonormal basis such that
\begin{equation}
\overline g(\overline J\xi _i,\overline J\xi _i)=-\epsilon _i; \quad 
\overline g(\overline J\xi _i,\overline J\xi _j)=0, \, \, i\neq j; 
\quad i,j=1,2,\ldots,r. 
\label{3.12}
\end{equation}
Consequently, $\overline JD^\bot $ is an $r$-dimensional non-degenerate subbundle of $TM^\bot $ 
with respect to $\overline g$ and we put
\begin{equation}
TM^\bot =\overline JD^\bot \bot (\overline JD^\bot )^\bot ,
\label{3.13}
\end{equation}
where $(\overline JD^\bot )^\bot $ is the complementary orthogonal vector subbundle of $\overline JD^\bot $ in
$TM^\bot $. We denote by $P_1$ and $P_2$ the projection morphisms of $TM^\bot $ on $\overline JD^\bot $ and 
$(\overline JD^\bot )^\bot $, respectively. Then for any $V\in \Gamma (TM^\bot )$ we put
\begin{equation}
V=P_1V+P_2V,
\label{3.14}
\end{equation} 
where $P_1V\in \Gamma (\overline JD^\bot )$ and $P_2V\in \Gamma ((\overline JD^\bot )^\bot )$. Now, we will
show that the subbundle $(\overline JD^\bot )^\bot $ is holomorphic with respect to $\overline J$. Take
$W\in \Gamma ((\overline JD^\bot )^\bot )$, $X\in \Gamma (TM)$ and according to (\ref{3.10}) we compute $\overline g(\overline JW,X)=\overline g(W,\overline JX)=0$, which means that $\overline JW\in \Gamma (TM^\bot )$. On the other
hand, for each $N\in \Gamma (\overline JD^\bot )$ we have $\overline JN\in \Gamma (D^\bot )$, which implies 
$\overline g(\overline JW,N)=\overline g(W,\overline JN)=0$, i.e. 
$\overline JW\in \Gamma ((\overline JD^\bot )^\bot )$. We continue by proving that the normal bundle 
$TM^{\widetilde \bot }$ of the submanifold $(M,\widetilde g)$ coincides with the subbundle 
$D^\bot \bot (\overline JD^\bot )^\bot $ of $T\overline M$. First, let us suppose that 
$\overline Y\in \Gamma (D^\bot \bot (\overline JD^\bot )^\bot )$. Then for $\forall X\in \Gamma (TM)$ using 
(\ref{3.10}) we have $\overline {\widetilde g}(X,\overline Y)=\overline g(\overline JX,\overline Y)=0$. Hence, 
$\overline Y\in \Gamma (TM^{\widetilde \bot })$, i.e. the following relation holds
\begin{equation}
D^\bot \bot (\overline JD^\bot )^\bot \subseteq TM^{\widetilde \bot }. 
\label{3.15}
\end{equation}
Now, let $\overline Y\in \Gamma (TM^{\widetilde \bot })$. Then $\overline Y\in \Gamma (T\overline M)$ and 
$\overline {\widetilde g}(X,\overline Y)=0$ for $\forall X\in \Gamma (TM)$. The last is equivalent to 
$\overline g(X,\overline J\overline Y)=0$ which shows that $\overline J\overline Y \in \Gamma (TM^\bot )$. 
Since $(\overline JD^\bot )^\bot $ is holomorphic with respect to $\overline J $, applying $\overline J$ 
to (\ref{3.13}) we obtain $\overline J(TM^\bot )=D^\bot \bot (\overline JD^\bot )^\bot $. So, we conclude that 
$\overline Y \in \Gamma (D^\bot \bot (\overline JD^\bot )^\bot )$ and we have
\begin{equation}
TM^{\widetilde \bot }\subseteq D^\bot \bot (\overline JD^\bot )^\bot . 
\label{3.16}
\end{equation}
From (\ref{3.15}) and (\ref{3.16}) it follows that $TM^{\widetilde \bot }=D^\bot \bot (\overline JD^\bot )^\bot $.
Further, for $X\in \Gamma (D)$, $\xi \in \Gamma (D^\bot )$ and $W\in \Gamma ((\overline JD^\bot )^\bot )$, we get
$\overline {\widetilde g}(\xi ,X)=\overline g(\overline J\xi ,X)=0$, \, $\overline {\widetilde g}(\xi ,W)=
\overline g(\overline J\xi ,W)=0$. The last two equalities show that $D^\bot $ is orthogonal to $D$ and 
$(\overline JD^\bot )^\bot $ with respect to $\overline {\widetilde g}$. Then the following decompositions are valid
\begin{equation}
TM=D\widetilde {\bot }D^\bot , 
\label{3.17}
\end{equation}
\begin{equation}
TM^{\widetilde {\bot }}=D^\bot \widetilde {\bot }(\overline JD^\bot )^\bot  . 
\label{3.18}
\end{equation}
From (\ref{3.17}) and (\ref{3.18}) it is clear that the smooth distribution $D^\bot $ on $(M,\widetilde g)$ of
${\rank} \, r>0$ is an intersection of the tangent bundle $TM$ and the normal bundle $TM^{\widetilde {\bot }}$ of
$(M,\widetilde g)$. Hence, $(M,\widetilde g)$ is an $r$-lightlike submanifold of $\overline M$ with Radical
distribution ${\Rad} TM$ which coincides with $D^\bot $. According to (\ref{3.17}) and (\ref{3.18}), the distribution
$D$ and the vector bundle $(\overline JD^\bot )^\bot  $ are a screen distribution $S(TM)$ and a screen transversal
vector bundle $S(TM^{\widetilde {\bot }})$ of $(M,\widetilde g)$, respectively. The basis $\{\xi _1,\ldots,\xi _r\}$ of $D^\bot $ satisfying (\ref{3.11}) is a basis of ${\Rad} TM$ and consequently with respect to $\widetilde g$ we have
$\widetilde g(\xi _i,\xi _j)=0$ for any $i,j=1,\ldots ,r$. We put $N_i=-\epsilon _i\overline J\xi _i$, \, $i=1,\ldots ,r$. Then $\{N_1,\ldots,N_r\}$ is a basis of $\overline JD^\bot $ such that
\begin{equation}
\overline {\widetilde g}(N_i,N_j)=-\epsilon _i\epsilon _j\overline g(\overline J\xi _i,\xi _j)=0, 
\quad  i,j=1,\ldots ,r,
\label{3.19}
\end{equation}
\begin{equation}
\overline {\widetilde g}(N_i,\xi _i)=\epsilon _i\overline g(\xi _i,\xi _i)=1; \,\, 
\overline {\widetilde g}(N_i,\xi _j)=\epsilon _i\overline g(\xi _i,\xi _j)=0; \, i,j=1,\ldots ,r.
\label{3.20}
\end{equation}
From (\ref{3.19}) it follows that $\overline JD^\bot $ is a lightlike vector bundle with respect to $\overline {\widetilde g}$. The equalities (\ref{3.20}) show that $\overline JD^\bot $ is not orthogonal to ${\Rad} TM$ with
respect to $\overline {\widetilde g}$. It is easy also to check that $\overline JD^\bot $ is orthogonal to $S(TM)$
and $S(TM^{\widetilde {\bot }})$ with respect to $\overline {\widetilde g}$. So, the tangent bundle $T\overline M$
of $\overline M$ has the following decomposition
\begin{equation}
T\overline M=S(TM)\widetilde {\bot }S(TM^{\widetilde {\bot }})\widetilde {\bot }({\Rad} TM\widetilde {\oplus }
\overline JD^\bot ).
\label{3.21}
\end{equation}
The equality (\ref{3.21}) implies that $S(TM^{\widetilde {\bot }})\widetilde {\bot }({\Rad} TM\widetilde {\oplus }
\overline JD^\bot )$ is the complementary orthogonal vector bundle $S(TM)^{\widetilde {\bot }}$ to $S(TM)$ in 
$T\overline M$ with respect to $\overline {\widetilde g}$. Moreover, ${\Rad} TM\widetilde {\oplus }\overline JD^\bot $
is the complementary orthogonal vector bundle to $S(TM^{\widetilde {\bot }})$ in $S(TM)^{\widetilde {\bot }}$ with
respect to $\overline {\widetilde g}$, i.e.
\begin{equation}
S(TM^{\widetilde {\bot }})^{\widetilde {\bot }}={\Rad} TM\widetilde {\oplus }\overline JD^\bot .
\label{3.22}
\end{equation}
Finally, taking into account (\ref{3.22}) we conclude that the vector bundle $\overline JD^\bot $ is the
complementary (but not orthogonal with respect to $\overline {\widetilde g}$) vector bundle to ${\Rad} TM$ in
$S(TM^{\widetilde {\bot }})^{\widetilde {\bot }}$, i.e. $\overline JD^\bot $ is a lightlike transversal vector
bundle of $(M,\widetilde g)$ with respect to the pair 
$(S(TM)=D, \, \, S(TM^{\widetilde {\bot }})=(\overline JD^\bot )^\bot )$. Hence, $(M,\widetilde g)$ is an
$r$-lightlike Radical transversal lightlike submanifold.
\par
Conversely, let $(M,\widetilde g)$ be an $r$-lightlike Radical transversal lightlike submanifold of $\overline M$.
Take $X\in \Gamma (S(TM))$, $\xi \in \Gamma ({\Rad} TM)$, $N\in \Gamma ({\ltr}(TM))$, 
$W\in \Gamma (S(TM^{\widetilde {\bot }}))$ and using (\ref{1.2}),
(\ref{3.1}), (\ref{3.2}) and Proposition 3.1, we get
\begin{equation}
\begin{array}{lll}
\overline g(X,\xi )=-\overline {\widetilde g}(\overline JX,\xi )=0, \quad
\overline g(X,N)=-\overline {\widetilde g}(\overline JX,N)=0, \\
\overline g(\xi ,N)=-\overline {\widetilde g}(\overline J\xi ,N)=0, \quad
\overline g(W,X)=-\overline {\widetilde g}(\overline JW,X)=0, \\
\overline g(W,\xi )=-\overline {\widetilde g}(\overline JW,\xi )=0, \quad
\overline g(W,N)=-\overline {\widetilde g}(\overline JW,N)=0.
\end{array}
\label{3.23}
\end{equation}
The equalities (\ref{3.23}) show that the vector bundles $S(TM)$, ${\Rad} TM$, ${\ltr}(TM)$ and $S(TM^{\widetilde {\bot }})$
are mutually orthogonal with respect to $\overline g$. This fact and (\ref{1.2}) imply the following decompositions
\begin{equation}
\begin{array}{lll}
TM=S(TM)\bot {\Rad} TM, \quad {\tr}(TM)={\ltr}(TM)\bot S(TM^{\widetilde {\bot }}), \\
T\overline M=TM\bot {\tr}(TM).
\end{array}
\label{3.24}
\end{equation}
From the last equality of (\ref{3.24}) it follows that the normal bundle $TM^\bot $ of the submanifold $(M,g)$
coincides with the transversal vector bundle ${\tr}(TM)$ of $(M,\widetilde g)$. Hence, both $TM$ and $TM^\bot $ are
non-degenerate with respect to $\overline g$. The distribution $S(TM)$ is holomorphic and non-degenerate with 
respect to $\widetilde g$. According to Lemma 3.1, $S(TM)$ is non-degenerate with respect to $g$, too.
If we assume that the distribution ${\Rad} TM$ is degenerate with respect to $g$, then the first equality of
(\ref{3.24}) implies that $TM$ is degenerate, which is a contradiction. Consequently, $(M,g)$ is endowed with
two complementary orthogonal with respect to $g$ distributions $D^\bot ={\Rad} TM$ and $D=S(TM)$ satisfying 
(\ref{3.1}) and (\ref{3.2}), respectively. Since $TM^\bot ={\tr}(TM)$ and by using (\ref{3.24}) we have 
$TM^\bot =\overline JTM\bot S(TM^{\widetilde {\bot }})$. Hence, the distribution $D^\bot $ is totally real 
and moreover $S(TM^{\widetilde {\bot }})$ is the complementary orthogonal with respect to $\overline g$ vector bundle
of $\overline JTM$ in $TM^\bot $. So, we establish that $(M,g)$ is a CR-submanifold of $\overline M$. 
\end{proof}
Taking into account the definition of a generic submanifold and a coisotropic one, as an immediate consequence
from Theorem 3.2, we obtain
\begin{corollary}
Let $(\overline M,\overline J,\overline g,\overline {\widetilde g})$ be a $2n$-dimensional
almost complex manifold with Norden metric and $M$ be an $m$-dimensional submanifold of $\overline M$. The 
submanifold $(M,g)$ is generic iff $(M,\widetilde g)$ is a coisotropic Radical transversal lightlike submanifold.
\end{corollary}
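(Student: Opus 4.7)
The plan is to deduce the Corollary almost immediately from Theorem 3.2 by matching the extra dimension conditions in the definitions of a generic submanifold and a coisotropic lightlike submanifold. Recall $\dim \overline M=2n$, $\dim M=m$, so $\codim M=2n-m$. A CR-submanifold $(M,g)$ is generic precisely when $\dim D^\bot =2n-m$, while an $r$-lightlike submanifold $(M,\widetilde g)$ is coisotropic precisely when $r=2n-m$ and $S(TM^{\widetilde\bot})=\{0\}$.

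For the forward direction, assume $(M,g)$ is generic, so it is a CR-submanifold with $\dim D^\bot =2n-m$. Theorem 3.2 then gives that $(M,\widetilde g)$ is a $(2n-m)$-lightlike Radical transversal lightlike submanifold, so the Radical rank already equals the codimension. I still need to show $S(TM^{\widetilde\bot})=\{0\}$. Here I would quote directly from the construction inside the proof of Theorem 3.2, where $S(TM^{\widetilde\bot})$ was identified with $(\overline JD^\bot)^\bot$ inside $TM^\bot$ and $TM^{\widetilde\bot}=\overline JD^\bot\bot(\overline JD^\bot)^\bot$ was established; since $\dim\overline JD^\bot=r=2n-m=\dim TM^\bot$, the orthogonal complement $(\overline JD^\bot)^\bot$ is zero, giving $S(TM^{\widetilde\bot})=\{0\}$. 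Hence $(M,\widetilde g)$ is coisotropic.

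For the converse, assume $(M,\widetilde g)$ is a coisotropic Radical transversal lightlike submanifold, so $r=\codim M=2n-m$ and $S(TM^{\widetilde\bot})=\{0\}$. By the converse part of Theorem 3.2, $(M,g)$ is a CR-submanifold with $D^\bot=\Rad TM$, whence $\dim D^\bot=r=2n-m=\codim M$. This is exactly the defining condition for a generic submanifold.

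There is essentially no new computation to do; the only non-formal ingredient is the dimension count for the screen transversal bundle in the forward direction, which is the step I would present most carefully. I do not anticipate a significant obstacle, provided Theorem 3.2 and its internal description of $TM^{\widetilde\bot}$ are invoked cleanly.
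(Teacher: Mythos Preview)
Your proposal is correct and follows exactly the route the paper takes: the paper states the corollary as ``an immediate consequence from Theorem 3.2'' together with the definitions of generic and coisotropic submanifolds, and your argument is precisely the dimension count that makes this immediacy explicit. The one place you do a bit more work than the paper---verifying $S(TM^{\widetilde\bot})=(\overline JD^\bot)^\bot=\{0\}$ via the identification inside the proof of Theorem~3.2---is exactly what is implicitly assumed there, so no deviation in method.
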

Also we state
\begin{theorem}
Let $(\overline M,\overline J,\overline g,\overline {\widetilde g})$ be a $2n$-dimensional
almost complex manifold with Norden metric and $M$ be an $m$-dimensional 
submanifold of $\overline M$. Then
\begin{arabiclist}[(2)]
\item The submanifold $(M,g)$ $(1<m<n)$ is totally real  iff 
$(M,\widetilde g)$ is an isotropic submanifold such that $\overline J(TM)={\ltr}(TM)$.
\item The submanifold $(M,g)$ is Lagrangian iff $(M,\widetilde g)$ is a totally lightlike submanifold
such that $\overline J(TM)={\ltr}(TM)$.
\end{arabiclist}
\end{theorem}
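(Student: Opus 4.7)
The plan is to exploit the identity $\overline{\widetilde g}(X,Y)=\overline g(\overline JX,Y)$ together with the fact that $\overline J$ is self-adjoint with respect to $\overline g$ (an immediate consequence of the Norden anti-isometry property $\overline g(\overline JX,\overline JY)=-\overline g(X,Y)$). This identity converts the totally-real condition $\overline J(TM)\subseteq TM^{\bot}$ directly into total degeneracy of $\widetilde g$ on $TM$, and vice versa. Both parts of the theorem follow the same template; (2) is simply the extreme case $m=n$ of (1), where the codimension collapses to $m$ and every screen bundle vanishes.

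For the forward direction, assume $(M,g)$ is totally real. For $X,Y\in\Gamma(TM)$ the identity gives $\widetilde g(X,Y)=\overline g(\overline JX,Y)=0$ since $\overline JX\in\Gamma(TM^{\bot})$, so $\widetilde g$ vanishes identically on $TM\times TM$, i.e.\ $\Rad TM=TM$ has rank $m$. In case (1), the codimension $2n-m$ strictly exceeds $m$ because $m<n$, which is precisely the numerical condition for $(M,\widetilde g)$ to be isotropic; in case (2), $m=n$ forces codimension to equal $m$, matching the totally lightlike profile. Next I would exhibit $\overline J(TM)$ as the lightlike transversal bundle: $\overline J(TM)\cap TM$ is contained in $TM^{\bot}\cap TM=\{0\}$ by non-degeneracy of $g$, and a direct computation using $\overline J^{2}=-\mathrm{id}$ and the anti-isometry property yields $\overline{\widetilde g}(\overline JX,\overline JY)=0$ together with $\overline{\widetilde g}(\overline JX,Y)=-g(X,Y)$, the latter a non-degenerate pairing between $\overline J(TM)$ and $\Rad TM$. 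In case (1), I must also produce $S(TM^{\widetilde\bot})$: I would take it to be the $\overline g$-orthogonal complement of $\overline J(TM)$ inside $TM^{\bot}$, verify it is $\overline J$-invariant and $\overline{\widetilde g}$-non-degenerate, and check that together with $TM$ and $\overline J(TM)$ it reproduces the required decomposition of $T\overline M$. In case (2) both screen bundles are zero by the totally lightlike profile, so this step is vacuous.

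For the converse, assume $(M,\widetilde g)$ is isotropic (resp.\ totally lightlike) with $\overline J(TM)=\ltr(TM)$. Since $\Rad TM=TM$, one has $\overline g(\overline JX,Y)=\widetilde g(X,Y)=0$ for $X,Y\in\Gamma(TM)$, so $\overline J(TM)\subseteq TM^{\bot}$, which is the totally real (resp.\ Lagrangian, when $m=n$) condition. The one subtle point is deriving non-degeneracy of $g$ on $TM$: from the defining property that $\overline{\widetilde g}$ pairs $\ltr(TM)$ and $\Rad TM$ non-degenerately, combined with the identity $\overline{\widetilde g}(\overline JX,Y)=-g(X,Y)$, one reads off that $g|_{TM\times TM}$ is non-degenerate, so $(M,g)$ is indeed a non-degenerate submanifold.

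The principal technical obstacle is the bundle bookkeeping in case (1): honestly producing $S(TM^{\widetilde\bot})$ and matching it to the general decompositions (\ref{1.1})--(\ref{1.2}), so that $\overline J(TM)$ really sits inside $S(TM^{\widetilde\bot})^{\widetilde\bot}$ as a genuine complement of $\Rad TM$. I would model this step closely on the analogous construction already carried out in the proof of Theorem 3.2, where the subbundle $(\overline JD^{\bot})^{\bot}$ played exactly the structural role that the $\overline g$-orthogonal complement of $\overline J(TM)$ inside $TM^{\bot}$ plays here.
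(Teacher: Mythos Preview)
Your proposal is correct and follows essentially the same approach as the paper. The paper's proof is terser: it simply observes that a totally real (resp.\ Lagrangian) submanifold is a CR-submanifold with $D=\{0\}$ (and, for Lagrangian, additionally $(\overline JTM)^{\bot}=\{0\}$), then instructs the reader to set $D=S(TM)=\{0\}$ in the proof of Theorem~3.2, obtaining the decompositions $T\overline M=TM\bot\overline JTM\bot(\overline JTM)^{\bot}$ and $T\overline M=(TM\widetilde\oplus\ltr(TM))\widetilde\bot S(TM^{\widetilde\bot})$ with $\overline JTM=\ltr(TM)$ and $(\overline JTM)^{\bot}=S(TM^{\widetilde\bot})$. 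Your argument unpacks exactly these identifications directly---the key computations $\widetilde g(X,Y)=\overline g(\overline JX,Y)$ and $\overline{\widetilde g}(\overline JX,Y)=-g(X,Y)$ are the same ones driving Theorem~3.2's proof once $D^{\bot}=TM$---and you yourself note that the bundle bookkeeping for $S(TM^{\widetilde\bot})$ should be modeled on that proof.
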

\begin{proof}
An $m$-dimensional ($1<m<n$) submanifold $(M,g)$ of $\overline M$ is totally real if it is a CR-submanifold with a
holomorphic distribution $D=\{0\}$, i.e. $TM=D^\bot $. As $1<m<n$, $\overline JTM\subset TM^\bot $. On the other hand,
for an isotropic submanifold $S(TM)=\{0\}$.
By replacing $D$ and $S(TM)$ by zero in the proof of Theorem 3.2, we establish the truth of assertion $(1)$. 
Hence, we have the following direct decompositions of the tangent bundle $T\overline M$ of
$\overline M$
\begin{equation}
\begin{array}{ll}
T\overline M=TM\bot \overline JTM\bot (\overline JTM)^\bot , \, \,
T\overline M=(TM\widetilde \oplus {\ltr}(TM))\widetilde \bot S(TM^{\widetilde \bot}),
\end{array}
\label{3.27}
\end{equation}
where $\overline JTM={\ltr}(TM)$ and $(\overline JTM)^\bot =S(TM^{\widetilde \bot})$.
\par
A Lagrangian submanifold is a particular case of a CR-submanifold, too. Now, $D=\{0\}$
and $\overline JD^\bot =\overline JTM=TM^\bot $. Hence, $(\overline JTM)^\bot =\{0\}$. So, following the proof of
Theorem 3.2 and taking into account
that for a totally lightlike submanifold $S(TM)=S(TM^{\widetilde \bot})=\{0\}$, we verify that 
assertion $(2)$ is true. 
\end{proof}
\begin{remark}
From (\ref{2.1}) we have $\overline  g(X,Y)=-\overline {\widetilde g}(\overline JX,Y)$. Therefore we can
interchange the metrics $\overline g$ (resp. $g$) and $\overline {\widetilde g}$ (resp. $\widetilde g$) in
Theorem 3.2, Corollary 3.1 and Theorem 3.3.
\end{remark}

\section{Relations between the induced geometric objects on a totally real and an isotropic submanifold of almost
complex manifold with Norden metric}
Further, our purpose is  to establish relations between the induced geometric objects on the submanifolds 
introduced in the previous section. As a first step in this direction, we start with the study of the submanifolds 
from assertion $(1)$ of Theorem 3.3. In this section we consider an $m$-dimensional $(1<m<n)$ totally real
submanifold $(M,g)$ of a $2n$-dimensional almost complex manifold with Norden metric 
$(\overline M,\overline J,\overline g,\overline {\widetilde g})$. Then from Theorem 3.3 it follows that $(M,\widetilde g)$ is an isotropic submanifold of
$\overline M$ such that $\overline J(TM)={\ltr}(TM)$. From now on, we will denote: 
$\overline X,\overline Y,\overline Z,\overline U\in \Gamma (T\overline M)$; $X,Y,Z,U\in \Gamma (TM)$; 
$V,V^\prime \in \Gamma (TM^\bot )=\Gamma ({\tr}(TM))$; $N,N^\prime \in \Gamma (\overline JTM)=\Gamma ({\ltr}(TM))$
and $W,W^\prime \in \Gamma ((\overline JTM)^\bot )=\Gamma (S(TM^{\widetilde {\bot }}))$. 
\par
Let $\overline {\widetilde \nabla }$ be the
Levi-Civita connection of the metric $\overline {\widetilde g}$ on $\overline M$. According to \, \cite{D-B} , 
the formulas of Gauss and Weingarten for an $r$-lightlike submanifold  $(M,\widetilde g)$ of $\overline M$ are
\begin{equation}
\begin{array}{ll}
\overline {\widetilde \nabla }_XY=\widetilde \nabla _XY+\widetilde h(X,Y) , \\
\overline {\widetilde \nabla }_XV=-\widetilde A_VX+\nabla ^t_XV, 
\end{array}
\label{3.25}
\end{equation}
where: $\{\widetilde \nabla _XY, \widetilde A_VX\}$ and $\{\widetilde h(X,Y), \nabla ^t_XV\}$ belong to $\Gamma (TM)$
and $\Gamma ({\tr}(TM))$, respectively; $\widetilde \nabla$ and $\nabla ^t$ are linear connections on $(M,\widetilde g)$
and ${\tr}(TM)$, respectively. Moreover, $\widetilde \nabla$ is torsion-free linear connection, $\widetilde h$ is a 
$\Gamma ({\tr}(TM))$-valued symmetric ${\cal F}(M)$-bilinear form on $\Gamma (TM)$ and $\widetilde A$ is a 
$\Gamma (TM)$-valued ${\cal F}(M)$-bilinear form on $\Gamma ({\tr}(TM))\times \Gamma (TM)$. In general, 
$\widetilde \nabla$ and $\nabla ^t$ are not metric connections.
In \, \cite{D-B} the following objects are introduced
\begin{equation*}
\begin{array}{ll}
h^l(X,Y)=L(\widetilde h(X,Y)) ; \quad h^s(X,Y)=S(\widetilde h(X,Y)) ; \\
D^l_XV=L(\nabla ^t_XV) ; \quad \quad \quad D^s_XV=S(\nabla ^t_XV),
\end{array}
\end{equation*}
where $L$ and $S$ are the projection morphisms of ${\tr}(TM)$ on ${\ltr}(TM)$ and $S(TM^{\widetilde \bot })$, respectively.
Then the formulas (\ref{3.25}) become
\begin{equation*}
\begin{array}{ll}
\overline {\widetilde \nabla }_XY=\widetilde \nabla _XY+h^l(X,Y)+h^s(X,Y), \\
\overline {\widetilde \nabla }_XV=-\widetilde A_VX+D^l_XV+D^s_XV.
\end{array}
\end{equation*}
Besides $D^l$ and $D^s$ do not define linear connections on ${\tr}(TM)$ but they are Otsuki connections on ${\tr}(TM)$
with respect to $L$ and $S$, respectively. We note that for an $r$-lightlike submanifold $D^l$ is a metric
Otsuki connection on ${\tr}(TM)$.
Now, the formulas of Gauss and Weingarten for an isotropic submanifold  $(M,\widetilde g)$ of $\overline M$ are
\begin{equation}
\begin{array}{lll}
\overline {\widetilde \nabla }_XY=\widetilde \nabla _XY+h^s(X,Y) , \\
\overline {\widetilde \nabla }_XN=-\widetilde A_NX+\nabla ^l_XN+{\cal D}^s(X,N) , \\
\overline {\widetilde \nabla }_XW=-\widetilde A_WX+{\cal D}^l(X,W)+\nabla ^s_XW , 
\end{array}
\label{3.26}
\end{equation}
where: $\widetilde \nabla$ is a metric linear connection on $(M,\widetilde g)$;
$\nabla ^l$ and $\nabla ^s$, defined by $\nabla ^l_XN=D^l_XN$ and $\nabla ^s_XW=D^s_XW$, 
are metric linear connections on ${\ltr}(TM)$ and $S(TM^{\widetilde \bot })$,
respectively; ${\cal D}^l$ and ${\cal D}^s$, defined by ${\cal D}^l(X,W)=D^l_XW$ and ${\cal D}^s(X,N)=D^s_XN$,
are ${\cal F}(\overline M)$-bilinear mappings. By using (\ref{3.26}) and taking into account that 
$\overline {\widetilde \nabla }$ is a metric connection we obtain
\begin{equation*}
\begin{array}{ll}
\overline {\widetilde g}(h^s(X,Y),W)+\overline {\widetilde g}(Y,{\cal D}^l(X,W))=0; \quad
\overline {\widetilde g}({\cal D}^s(X,N),W)=\overline {\widetilde g}(N,\widetilde A_WX); \\
\overline {\widetilde g}(\widetilde A_NX,N^\prime )+\overline {\widetilde g}(\widetilde A_{N^\prime }X,N)=0.
\end{array}
\end{equation*} 
Further, we will find the formulas of Gauss and Weingarten for $(M,g)$. Denoting by 
$\overline \nabla $ and $\nabla $ the Levi-Civita connections of $\overline g$ and $g$ on $\overline M$ 
and $(M,g)$ respectively, we have
\begin{equation}
\begin{array}{ll}
\overline \nabla _XY=\nabla _XY+h(X,Y) , \\
\overline \nabla _XV=-A_VX+D_XV ,
\end{array}
\label{3.28}
\end{equation}
where: $h$ is the second fundamental form of $(M,g)$; $A_V$ is the shape operator of $(M,g)$ with
respect to $V$; $D$ is the normal connection on $TM^\bot $ which is a metric linear connection. Taking into account
the first equality of (\ref{3.27}), the formulas (\ref{3.28}) become
\begin{equation}
\begin{array}{ll}
\overline \nabla _XY=\nabla _XY+h^1(X,Y)+h^2(X,Y) , \\
\overline \nabla _XV=-A_VX+D^1_XV+D^2_XV ,
\end{array}
\label{3.29}
\end{equation}
where we denote
\begin{equation*}
\begin{array}{ll}
h^1(X,Y)=P_1(h(X,Y)) ; \quad h^2(X,Y)=P_2(h(X,Y)) ; \\
D^1_XV=P_1(D_XV) ; \quad \quad \quad D^2_XV=P_2(D_XV)
\end{array}
\end{equation*}
and $P_1$, $P_2$ are the projection morphisms of $TM^\bot $ on $\overline JTM$, $(\overline JTM)^\bot $, 
respectively.
By direct calculations we check that $D^1$ and $D^2$ do not define linear connections on $TM^\bot $ but
both are Otsuki connections with respect to the vector bundle morphism $P_1$ and $P_2$, respectively. So, the
formulas (\ref{3.29}) can be written as follows
\begin{equation}
\begin{array}{lll}
\overline \nabla _XY=\nabla _XY+h^1(X,Y)+h^2(X,Y) , \\
\overline \nabla _XN=-A_NX+\nabla ^1_XN+{\cal D}^2(X,N) , \\
\overline \nabla _XW=-A_WX+{\cal D}^1(X,W)+\nabla ^2_XW , 
\end{array}
\label{3.30}
\end{equation}
where: $\nabla ^1$ and $\nabla ^2$, defined by $\nabla ^1_XN=D^1_XN$ and $\nabla ^2_XW=D^2_XW$, are metric
linear connections on $\overline JTM$ and $(\overline JTM)^\bot $, respectively; ${\cal D}^1$ and
${\cal D}^2$, defined by ${\cal D}^1(X,W)=D^1_XW$ and ${\cal D}^2(X,N)=D^2_XN$, are ${\cal F}(\overline M)$-bilinear
mappings. As $\overline \nabla $ is a metric connection, from (\ref{3.30}) we obtain
\begin{equation}
\overline g(h^1(X,Y),N)=\overline g(A_NX,Y),
\label{3.31}
\end{equation}
\begin{equation}
\overline g(h^2(X,Y),W)=\overline g(A_WX,Y),
\label{3.32}
\end{equation}
\begin{equation}
\overline g({\cal D}^2(X,N),W)=-\overline g({\cal D}^1(X,W),N).
\label{3.33}
\end{equation}
\begin{theorem}
Let $(M,g)$ be a totally real submanifold of $(\overline M,\overline J,\overline g,\overline {\widetilde g})$. 
Then the following assertions are equivalent:
\begin{romanlist}[iv]
\item $D^1$ is a metric Otsuki connection on $TM^\bot $.
\item ${\cal D}^1(X,W)=0$, for any $X\in \Gamma (TM)$, $W\in \Gamma ((\overline JTM)^\bot )$.
\item ${\cal D}^2(X,N)=0$, for any $X\in \Gamma (TM)$, $N\in \Gamma (\overline JTM)$.
\item $D^2$ is a metric Otsuki connection on $TM^\bot $.
\end{romanlist}
\end{theorem}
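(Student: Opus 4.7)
The plan is to establish the cycle $(i)\Leftrightarrow(ii)\Leftrightarrow(iii)\Leftrightarrow(iv)$, exploiting the split $D^1_XV=\nabla^1_XN+\mathcal{D}^1(X,W)$ (and the analogous $D^2_XV=\nabla^2_XW+\mathcal{D}^2(X,N)$) of the Otsuki connection into its already-metric piece and the cross-term, combined with identity (\ref{3.33}).

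First, I would unpack what it means for $D^1$ to be a metric Otsuki connection on $TM^\bot$ with respect to $P_1$: for $X\in\Gamma(TM)$ and $V=N+W$, $V'=N'+W'$ in $\Gamma(TM^\bot)$ decomposed along $\overline JTM\bot(\overline JTM)^\bot$, the required identity reads $X\overline g(N,N')=\overline g(D^1_XV,N')+\overline g(N,D^1_XV')$. Expanding both sides and using that $\nabla^1$ is metric on $\overline JTM$, the $\nabla^1$-contributions cancel, and the condition reduces to $\overline g(\mathcal{D}^1(X,W),N')+\overline g(N,\mathcal{D}^1(X,W'))=0$ for all such $N,N',W,W'$. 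Specializing $N=0$ and $W'=0$ forces $\overline g(\mathcal{D}^1(X,W),N')=0$ for every $W,N'$; since $\mathcal{D}^1(X,W)\in\Gamma(\overline JTM)$ and $\overline g$ is non-degenerate on $\overline JTM$ (cf.\ (\ref{3.11})--(\ref{3.12})), this yields $\mathcal{D}^1(X,W)=0$. The reverse implication is immediate from the expansion, so $(i)\Leftrightarrow(ii)$.

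Next, $(ii)\Leftrightarrow(iii)$ follows directly from identity (\ref{3.33}): if $\mathcal{D}^1\equiv 0$ then $\overline g(\mathcal{D}^2(X,N),W)=0$ for every $W\in\Gamma((\overline JTM)^\bot)$; since $\mathcal{D}^2(X,N)$ lies in the non-degenerate subbundle $(\overline JTM)^\bot$, this forces $\mathcal{D}^2(X,N)=0$. The converse is symmetric, using non-degeneracy of $\overline g$ on $\overline JTM$.

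Finally, $(iii)\Leftrightarrow(iv)$ is proved in complete analogy with $(i)\Leftrightarrow(ii)$, now expanding $D^2_XV=\nabla^2_XW+\mathcal{D}^2(X,N)$ and exploiting that $\nabla^2$ is metric on $(\overline JTM)^\bot$. The chief obstacle is pinning down the precise meaning of \emph{``metric Otsuki connection on $TM^\bot$''} from the sparse hint given in Section 3 (via $D^l$ on $\tr(TM)$); once that is formalized as above, all four statements collapse to the single symmetric condition $\mathcal{D}^1\equiv 0\equiv\mathcal{D}^2$, and the theorem follows.
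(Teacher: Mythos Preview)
Your proposal is correct and follows essentially the same route as the paper: both arguments reduce the ``metric Otsuki'' condition for $D^1$ (resp.\ $D^2$) to the vanishing of the cross-term $\mathcal{D}^1(X,W)$ (resp.\ $\mathcal{D}^2(X,N)$) by using that $\nabla^1,\nabla^2$ are already metric on their respective subbundles, and then link $(ii)$ and $(iii)$ via identity~(\ref{3.33}) together with non-degeneracy of $\overline g$ on $\overline JTM$ and $(\overline JTM)^\bot$. The paper packages this as the computation $(D^1_X\overline g)(W,N)=-\overline g(\mathcal{D}^1(X,W),N)=\overline g(\mathcal{D}^2(X,N),W)=-(D^2_X\overline g)(W,N)$ (with the remaining components automatically zero), which is exactly your expansion written in covariant-derivative form.
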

\begin{proof}
Using that $\nabla ^1$ and $\nabla ^2$ are metric linear connections on $\overline JTM$ and $(\overline JTM)^\bot $ respectively and (\ref{3.33}) we have
\begin{equation*}
\begin{array}{ll}
(D^1_X\overline g)(N,N^\prime )=(D^2_X\overline g)(N,N^\prime )=0,\, \,
(D^1_X\overline g)(W,W^\prime )=(D^2_X\overline g)(W,W^\prime )=0, \\
(D^1_X\overline g)(W,N)=-\overline g({\cal D}^1(X,W),N)=\overline g({\cal D}^2(X,N),W)=-(D^2_X\overline g)(W,N), 
\end{array}
\end{equation*}
In this way we see that assertions $(i)$, $(ii)$, $(iii)$ and $(iv)$ are equivalent.
\end{proof}
According to (\ref{3.27}), we can write (\ref{3}) in the following manner
\begin{equation}
\overline {\widetilde \nabla }_{\overline X}\overline Y=\overline \nabla _{\overline X}\overline Y+
\Phi ^\prime (\overline X,\overline Y)+\Phi ^1(\overline X,\overline Y)+\Phi ^2(\overline X,\overline Y),
\label{3.34}
\end{equation}
where we denote by $\Phi ^\prime (\overline X,\overline Y)$, $\Phi ^1(\overline X,\overline Y)$ and
$\Phi ^2(\overline X,\overline Y)$ the part of $\Phi (\overline X,\overline Y)$ which belongs to $\Gamma (TM)$,
$\Gamma (\overline JTM)=\Gamma ({\ltr}(TM))$ and $\Gamma ((\overline JTM)^\bot )=\Gamma (S(TM^{\widetilde \bot}))$,
respectively. By using (\ref{3.26}) and (\ref{3.30}) from (\ref{3.34}) we get
\begin{equation*}
\begin{array}{llllll}
\widetilde \nabla _XY+h^s(X,Y)=\nabla _XY+h^1(X,Y)+h^2(X,Y)\\
\qquad \qquad \qquad \qquad +\Phi ^\prime (X,Y)+\Phi ^1(X,Y)+\Phi ^2(X,Y) ; \\
-\widetilde A_NX+\nabla ^l_XN+{\cal D}^s(X,N)=-A_NX+\nabla ^1_XN+{\cal D}^2(X,N) \\
\qquad \qquad \qquad \qquad \qquad \qquad \, \, \,  +\Phi ^\prime (X,N)+\Phi ^1(X,N)+\Phi ^2(X,N) ; \\
-\widetilde A_WX+{\cal D}^l(X,W)+\nabla ^s_XW=-A_WX+{\cal D}^1(X,W)+\nabla ^2_XW \\
\qquad \qquad \qquad \qquad \qquad \qquad \quad +\Phi ^\prime (X,W)+\Phi ^1(X,W)+\Phi ^2(X,W).
\end{array}
\end{equation*}
Comparing the parts belonging to
$\Gamma (TM)$, $\Gamma (\overline JTM)=\Gamma ({\ltr}(TM))$ and $\Gamma ((\overline JTM)^\bot )=\Gamma (S(TM^{\widetilde \bot}))$ of both sides of the above equations, we obtain the following relations
\begin{equation}
\begin{array}{lll}
\widetilde \nabla _XY=\nabla _XY+\Phi ^\prime (X,Y), \quad h^s(X,Y)=h^2(X,Y)+\Phi ^2(X,Y), \\
h^1(X,Y)=-\Phi ^1(X,Y), \quad  \quad \widetilde A_NX=A_NX-\Phi ^\prime (X,N), \\
\nabla ^l_XN=\nabla ^1_XN+\Phi ^1(X,N), \quad {\cal D}^s(X,N)={\cal D}^2(X,N)+\Phi ^2(X,N), \\ 
\widetilde A_WX=A_WX-\Phi ^\prime (X,W), \quad {\cal D}^l(X,W)={\cal D}^1(X,W)+\Phi ^1(X,W), \\
\nabla ^s_XW=\nabla ^2_XW+\Phi ^2(X,W).
\end{array}
\label{3.35}
\end{equation}
In \, \cite{GB} the eight classes of almost complex manifolds with Norden metric are characterized by
conditions for the tensor $F$. In \, \cite{GGM} the following relations between the tensor $F$ and $\Phi $ are
obtained 
\begin{equation}
\Phi (\overline X,\overline Y,\overline Z)=\displaystyle{
\frac{1}{2}\left(F(\overline J\overline Z,\overline X,\overline Y)-F(\overline X,\overline Y,\overline JZ)
-F(\overline Y,\overline JZ,\overline X)\right)},
\label{3.36}
\end{equation}
\begin{equation}
F(\overline X,\overline Y,\overline Z)=\Phi (\overline X,\overline Y,\overline J\overline Z)+
\Phi (\overline X,\overline Z,\overline J\overline Y),
\label{3.37}
\end{equation}
where $\Phi (\overline X,\overline Y,\overline Z)=\overline g(\Phi (\overline X,\overline Y),\overline Z)$.
Also in \, \cite{GGM} , by using (\ref{3.36}) and
(\ref{3.37}), every basic class of almost complex manifolds with Norden metric is characterized by conditions
for the tensor $\Phi $. Taking into account both types of characterization conditions, we can specify the  
equations (\ref{3.35}) when the ambient manifold $\overline M$ belongs to any basic class. Further, in this section
we consider only the case when $\overline M$ is a Kaehler manifold with Norden metric. Then the  characterization 
condition of $\overline M$ is $F(\overline X,\overline Y,\overline Z)=0$ which is equivalent to
$(\overline \nabla _{\overline X}\overline J)\overline Y=0$.
\begin{lemma}
Let $\overline M$ be a Kaehler manifold with Norden metric and $(M,g)$ be a totally real submanifold of $\overline M$.
Then 
\begin{equation}
h^1(X,Y)=0, \qquad A_{\overline JY}X=0,
\label{3.38}
\end{equation} 
\begin{equation}
\nabla ^1_X\overline JY=\overline J(\nabla _XY), 
\label{3.39}
\end{equation}
\begin{equation}
{\cal D}^2(X,\overline JY)=\overline J(h^2(X,Y)), 
\label{3.40}
\end{equation}
\begin{equation}
A_WX=\overline J({\cal D}^1(X,\overline JW)), 
\label{3.41}
\end{equation}
\begin{equation}
\nabla ^2_X\overline JW=\overline J(\nabla ^2_XW).
\label{3.42}
\end{equation}
\end{lemma}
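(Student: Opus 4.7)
The crucial observation is that for a Kaehler manifold with Norden metric the condition $F=0$ is equivalent to $\overline\nabla\,\overline J=0$, and since $\overline\nabla\,\overline g=0$ the defining relation $\overline{\widetilde g}(X,Y)=\overline g(\overline JX,Y)$ then yields $\overline\nabla\,\overline{\widetilde g}=0$. Because $\overline\nabla$ is torsion-free it must coincide with the Levi-Civita connection $\overline{\widetilde\nabla}$ of $\overline{\widetilde g}$, so $\Phi\equiv 0$. Substituting $\Phi=0$ (and hence $\Phi^\prime=\Phi^1=\Phi^2=0$) into the relations \eqref{3.35} immediately produces $h^1(X,Y)=-\Phi^1(X,Y)=0$, which is the first half of \eqref{3.38}, and also tells us that the Gauss--Weingarten data for $(M,g)$ and $(M,\widetilde g)$ are matched under the obvious identification.

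For \eqref{3.39}, \eqref{3.40} and the remaining part of \eqref{3.38}, the plan is to take $Y\in\Gamma(TM)$, note that $\overline JY\in\Gamma(\overline JTM)=\Gamma(\ltr(TM))$, and write the Kaehler identity $\overline\nabla_X(\overline JY)=\overline J(\overline\nabla_XY)$ in two different ways. The right-hand side expands via the Gauss formula in \eqref{3.30} and, after $h^1=0$, reads $\overline J(\nabla_XY)+\overline J(h^2(X,Y))$; here $\overline J(\nabla_XY)\in\Gamma(\ltr(TM))$, while by Proposition~3.1 the screen transversal bundle $(\overline JTM)^\bot$ is holomorphic, so $\overline J(h^2(X,Y))\in\Gamma((\overline JTM)^\bot)$. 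The left-hand side expands via the Weingarten formula for an $N$-type field, giving $-A_{\overline JY}X+\nabla^1_X\overline JY+\mathcal D^2(X,\overline JY)$. Matching the three summands component-by-component against the canonical decomposition $T\overline M=TM\bot\ltr(TM)\bot(\overline JTM)^\bot$ (using $\overline J^2=-I$ to see that $\overline J(\nabla_XY)$ lies in $\ltr(TM)$) yields $A_{\overline JY}X=0$, $\nabla^1_X\overline JY=\overline J(\nabla_XY)$ and $\mathcal D^2(X,\overline JY)=\overline J(h^2(X,Y))$.

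For \eqref{3.41} and \eqref{3.42} the same device is applied, this time starting from $W\in\Gamma((\overline JTM)^\bot)$. By Proposition~3.1, $\overline JW$ still lies in $(\overline JTM)^\bot$, so the Weingarten formula \eqref{3.30} applies to both $W$ and $\overline JW$. Writing $\overline J(\overline\nabla_XW)=\overline\nabla_X(\overline JW)$ and expanding each side through \eqref{3.30} gives
\[
-\overline JA_WX+\overline J\mathcal D^1(X,W)+\overline J\nabla^2_XW=-A_{\overline JW}X+\mathcal D^1(X,\overline JW)+\nabla^2_X\overline JW.
\]
The terms are distributed as follows: $\overline JA_WX\in\Gamma(\ltr(TM))$ and $\overline J\mathcal D^1(X,W)\in\Gamma(TM)$ (because $\overline J$ swaps $TM$ and $\ltr(TM)$), while $\overline J\nabla^2_XW\in\Gamma((\overline JTM)^\bot)$ by Proposition~3.1. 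Projecting onto $\ltr(TM)$ gives $-\overline JA_WX=\mathcal D^1(X,\overline JW)$, and applying $\overline J$ once more produces $A_WX=\overline J\mathcal D^1(X,\overline JW)$, which is \eqref{3.41}; projecting onto $(\overline JTM)^\bot$ yields \eqref{3.42} at once.

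The main obstacle is purely bookkeeping: one has to keep straight in which of the three summands $TM$, $\ltr(TM)=\overline JTM$, $S(TM^{\widetilde\bot})=(\overline JTM)^\bot$ each object lives, and remember that $\overline J$ interchanges the first two while preserving the third (Proposition~3.1). No computation beyond projecting the Kaehler identity $\overline\nabla\,\overline J=0$ against these three bundles is needed.
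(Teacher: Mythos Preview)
Your argument is correct and follows essentially the same route as the paper: expand the Kaehler identity $\overline\nabla_X(\overline J\,\cdot)=\overline J(\overline\nabla_X\,\cdot)$ through the Gauss--Weingarten formulas \eqref{3.30} and match components against the decomposition $TM\bot\overline JTM\bot(\overline JTM)^\bot$. The only substantive difference is in how you obtain \eqref{3.38}. The paper does \emph{not} invoke $\Phi=0$ inside the proof of the lemma; instead, projecting the Kaehler identity onto $TM$ yields $h^1(X,Y)=\overline JA_{\overline JY}X$ (because at that stage $h^1$ is not yet known to vanish, so $\overline Jh^1(X,Y)$ contributes to the tangential part), and then a separate use of \eqref{3.31} gives $h^1(X,Y)=-\overline JA_{\overline JY}X$, forcing both to vanish. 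Your shortcut via $\Phi=0$ and the relation $h^1=-\Phi^1$ from \eqref{3.35} is perfectly valid and slightly quicker; the paper establishes $\Phi=0$ immediately after the lemma anyway. One small remark: the holomorphicity of $(\overline JTM)^\bot$ that you cite as ``Proposition~3.1'' is really proved inside the argument for Theorem~3.2 (Proposition~3.1 as stated concerns $(M,g)$ Radical transversal, whereas here the lightlike submanifold is $(M,\widetilde g)$); the conclusion you need is correct, only the pointer is slightly off.
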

\begin{proof}
From $\overline \nabla _{\overline X}\overline JY=\overline J\overline \nabla _{\overline X}\overline Y$ 
and (\ref{3.30}) we obtain the relations (\ref{3.39}) $\div $ (\ref{3.42}) and 
$h^1(X,Y)=\overline JA_{\overline JY}X$. On the other hand, replacing $N$ by $\overline JY$ in (\ref{3.31})
we have $h^1(X,Y)=-\overline JA_{\overline JY}X$. Hence, (\ref{3.38}) is fulfilled.
\end{proof}
The characterization condition $F=0$ for a Kaehler manifold with Norden $\overline M$
and (\ref{3.36}) imply that $\Phi =0$ for $\overline M$. 
Then the formulas (\ref{3}) and (\ref{3.35}) become
\begin{equation}
\overline \nabla _XY=\overline {\widetilde \nabla }_XY
\label{3.43}
\end{equation}
and
\begin{equation}
\begin{array}{lll}
\widetilde \nabla _XY=\nabla _XY, \quad h^s(X,Y)=h^2(X,Y), \quad h^1(X,Y)=0,\\
\widetilde A_NX=A_NX, \quad \nabla ^l_XN=\nabla ^1_XN, \quad {\cal D}^s(X,N)={\cal D}^2(X,N),\\ 
\widetilde A_WX=A_WX, \quad {\cal D}^l(X,W)={\cal D}^1(X,W), \quad
\nabla ^s_XW=\nabla ^2_XW.
\end{array}
\label{3.44}
\end{equation}
From (\ref{3.44}) it is clear that the relations (\ref{3.38}) $\div $ (\ref{3.42}) are valid if we replace
$\nabla , h^1, h^2, A_N, A_W, \nabla ^1, \nabla ^2, {\cal D}^1, {\cal D}^2$ by $\widetilde \nabla , h^l, h^s,
\widetilde A_N, \widetilde A_W, \nabla ^l, \nabla ^s, {\cal D}^l, {\cal D}^s$, respectively.
\begin{theorem}
Let $\overline M$ be a Kaehler manifold with Norden metric and $(M,g)$ be a totally real submanifold of $\overline M$.
Then the following assertions are equivalent:
\begin{romanlist}[v]
\item $(M,g)$ is totally geodesic.
\item $h^2$ vanishes identically on $(M,g)$.
\item $A_W$ vanishes identically on $(M,g)$.
\item ${\cal D}^1(X,W)=0$, for $\forall X\in \Gamma (TM), \forall W\in \Gamma ((\overline JTM)^\bot )$.
\item ${\cal D}^2(X,N)=0$, for $\forall X\in \Gamma (TM), \forall N\in \Gamma (\overline JTM)$.
\end{romanlist}
\end{theorem}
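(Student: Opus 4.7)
The plan is to establish the equivalences by traversing them in a cycle $(\textup{i})\Leftrightarrow(\textup{ii})\Leftrightarrow(\textup{iii})\Leftrightarrow(\textup{iv})\Leftrightarrow(\textup{v})$, each step using one of the structural identities already collected in Lemma~4.2, the Gauss--Weingarten relations (\ref{3.31})--(\ref{3.33}), and the non-degeneracy of $\overline g$ on each component of the decomposition $T\overline M=TM\bot \overline JTM\bot (\overline JTM)^\bot$ (which is orthogonal w.r.t. $\overline g$ because $(M,g)$ is non-degenerate).

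First I would dispose of $(\textup{i})\Leftrightarrow(\textup{ii})$ immediately: by the first equation of (\ref{3.29}), $h=h^1+h^2$, and Lemma~4.2 gives $h^1(X,Y)=0$ in the Kaehler case. Hence $h\equiv 0$ iff $h^2\equiv 0$. For $(\textup{ii})\Leftrightarrow(\textup{iii})$ I would just invoke (\ref{3.32}): since $h^2(X,Y)\in\Gamma((\overline JTM)^\bot)$, $A_WX\in\Gamma(TM)$, and $\overline g$ is non-degenerate on both $TM$ and $(\overline JTM)^\bot$, the identity $\overline g(h^2(X,Y),W)=\overline g(A_WX,Y)$ forces vanishing of one tensor to be equivalent to vanishing of the other.

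For $(\textup{iii})\Leftrightarrow(\textup{iv})$ the key tool is (\ref{3.41}): $A_WX=\overline J({\cal D}^1(X,\overline JW))$. Since $\overline J$ is bijective, $A_W\equiv 0$ is equivalent to ${\cal D}^1(X,\overline JW)=0$ for all $X,W$. Here I would need the observation that $(\overline JTM)^\bot$ is stable under $\overline J$; this follows from Proposition~3.1 applied in the isotropic guise (or directly: for $W\in(\overline JTM)^\bot$ and $X\in TM$, $\overline g(\overline JW,\overline JX)=-\overline g(W,X)=0$ and $\overline g(\overline JW,X)=\overline{\widetilde g}(W,X)=0$ since $(\overline JTM)^\bot\subset TM^{\widetilde\bot}$). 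Consequently $\overline J W$ ranges over all of $\Gamma((\overline JTM)^\bot)$ as $W$ does, and the two vanishing conditions coincide. Finally, $(\textup{iv})\Leftrightarrow(\textup{v})$ follows from (\ref{3.33}): ${\cal D}^1(X,W)\in\Gamma(\overline JTM)$ and ${\cal D}^2(X,N)\in\Gamma((\overline JTM)^\bot)$, so once again the non-degeneracy of $\overline g$ on $\overline JTM$ (which holds because $\overline JTM$ inherits non-degeneracy from $TM$ via $\overline g(\overline JX,\overline JY)=-\overline g(X,Y)$) and on $(\overline JTM)^\bot$ turns the bilinear identity $\overline g({\cal D}^2(X,N),W)=-\overline g({\cal D}^1(X,W),N)$ into the desired equivalence.

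The only step requiring any real thought is $(\textup{iii})\Leftrightarrow(\textup{iv})$, because it is the only one that uses a substructural property (the holomorphy of $(\overline JTM)^\bot$) rather than mere non-degeneracy; all other steps are algebraic consequences of the previously derived relations. No computation beyond what is already in Lemma~4.2 and the formulas (\ref{3.31})--(\ref{3.33}) and (\ref{3.41}) is needed.
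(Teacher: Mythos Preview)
Your proof is correct and follows essentially the same approach as the paper: both exploit the identities of Lemma~4.1 (which you label Lemma~4.2) together with (\ref{3.31})--(\ref{3.33}) to link the five conditions. The only cosmetic difference is that the paper closes the chain by proving $(\textup{ii})\Leftrightarrow(\textup{v})$ directly from (\ref{3.40}), whereas you obtain $(\textup{iv})\Leftrightarrow(\textup{v})$ from (\ref{3.33}); both routes are equally short, and your added justification that $(\overline JTM)^\bot$ is $\overline J$-invariant (needed for the step via (\ref{3.41})) makes explicit a point the paper leaves implicit.
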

\begin{proof}
The equality (\ref{3.38}) implies $h(X,Y)=h^2(X,Y)$ and consequently $(i)\Longleftrightarrow (ii)$. The equivalence
of $(ii)$ and $(iii)$ follows from (\ref{3.32}). Because of (\ref{3.40}) and (\ref{3.41}) we have 
$(ii)\Longleftrightarrow (v)$ and $(iii)\Longleftrightarrow (iv)$, respectively.
\end{proof}
\begin{theorem}
Let $\overline M$ be a Kaehler manifold with Norden metric. 
Then the following assertions are equivalent:
\begin{romanlist}[vi]
\item $(M,g)$ is totally geodesic.
\item $(M,\widetilde g)$ is totally geodesic.
\item $D^1$ is a metric Otsuki connection on $TM^\bot $.
\item $D^2$ is a metric Otsuki connection on $TM^\bot $.
\item $\nabla ^t$ is a metric linear connection on ${\tr}(TM)$.
\item $D^s$ is a metric Otsuki connection on ${\tr}(TM)$.
\end{romanlist}
\end{theorem}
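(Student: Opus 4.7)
The plan is to reduce all six conditions to equivalent statements about the vanishing of the off-diagonal pieces $\mathcal{D}^1,\mathcal{D}^2$ (equivalently $\mathcal{D}^l,\mathcal{D}^s$ via (\ref{3.44})) and then invoke Theorems 4.1 and 4.2. The equivalences (i) $\Leftrightarrow$ (iii) $\Leftrightarrow$ (iv) come almost for free: Theorem 4.2 shows that $(M,g)$ is totally geodesic iff $\mathcal{D}^1(X,W)\equiv 0$ iff $\mathcal{D}^2(X,N)\equiv 0$, and Theorem 4.1 identifies these two vanishings with (iii) and (iv) respectively.

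For (i) $\Leftrightarrow$ (ii), I would combine Lemma 4.1, which gives $h^1\equiv 0$ so that the full second fundamental form of $(M,g)$ is just $h=h^2$, with (\ref{3.44}), which states $h^s=h^2$ in the Kaehler setting. Since $(M,\widetilde{g})$ is isotropic, its only second fundamental form is $h^s$, so both conditions reduce to $h^2\equiv 0$.

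For (i) $\Leftrightarrow$ (vi), I would mimic the proof of Theorem 4.1 but with $\overline{\widetilde{g}}$ in place of $\overline{g}$. Using $D^s_X N=\mathcal{D}^s(X,N)$ and $D^s_X W=\nabla^s_X W$, together with the totally null character of $\ltr(TM)$ with respect to $\overline{\widetilde{g}}$ (see (\ref{3.19})) and the metric property of $\nabla^s$ on $S(TM^{\widetilde{\bot}})$, all components of $D^s_X\overline{\widetilde{g}}$ vanish automatically except $(D^s_X\overline{\widetilde{g}})(W,N)=-\overline{\widetilde{g}}(W,\mathcal{D}^s(X,N))$. Non-degeneracy of $\overline{\widetilde{g}}$ on $S(TM^{\widetilde{\bot}})$ then gives (vi) $\Leftrightarrow \mathcal{D}^s\equiv 0 \Leftrightarrow \mathcal{D}^2\equiv 0 \Leftrightarrow$ (iv).

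The trickiest step is (i) $\Leftrightarrow$ (v). Decomposing $V=N+W$ and $V'=N'+W'$ in the metric-compatibility condition for $\nabla^t$, and using that $\overline{\widetilde{g}}$ vanishes on $\ltr(TM)\times\tr(TM)$, the contributions of $\nabla^l_X N$ and $\mathcal{D}^l(X,W)$ drop out entirely, so $X\overline{\widetilde{g}}(V,V')$ reduces to $X\overline{\widetilde{g}}(W,W')$. After cancelling the $\nabla^s$-terms via metric compatibility of $\nabla^s$ on $S(TM^{\widetilde{\bot}})$, the condition collapses to $\overline{\widetilde{g}}(\mathcal{D}^s(X,N),W')+\overline{\widetilde{g}}(W,\mathcal{D}^s(X,N'))=0$ for all arguments, and non-degeneracy of $\overline{\widetilde{g}}$ on $S(TM^{\widetilde{\bot}})$ forces $\mathcal{D}^s\equiv 0$, again equivalent to (iv). The main obstacle is tracking which terms survive the orthogonality relations; the asymmetry between the null $\ltr(TM)$ and the non-degenerate $S(TM^{\widetilde{\bot}})$ is what makes the condition collapse to a single one on $\mathcal{D}^s$.
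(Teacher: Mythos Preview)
Your proposal is correct and follows the same overall route as the paper: reduce all six conditions to the vanishing of $\mathcal D^2$ (equivalently $\mathcal D^s$ and $h^2=h^s$ via (\ref{3.44})) and then invoke Theorems~4.1 and~4.2. The only difference is that the paper obtains the equivalences $(v),(vi)\Leftrightarrow \mathcal D^s\equiv 0$ and $(ii)\Leftrightarrow h^s\equiv 0$ by quoting Duggal--Bejancu \cite{D-B} (pp.~159, 167), whereas you derive them directly from the orthogonality relations and the metric property of $\nabla^s$; your argument is thus a self-contained version of the paper's, with no genuine methodological divergence.
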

\begin{proof}
According to Theorem 4.1 and Theorem 4.2, each from the assertions $(i), (iii), (iv)$ is equivalent to the
condition ${\cal D}^2(X,N)=0$. In \cite{D-B} (page 159 and 167) it is proved that assertions $(v), (vi)$ and $(ii)$
are equivalent to ${\cal D}^s(X,N)=0$ and $h^s(X,Y)=0$, respectively. From (\ref{3.43}) we have 
${\cal D}^2(X,N)={\cal D}^s(X,N)=0$ and $h^2(X,Y)=h^s(X,Y)=0$. Finally, according to Theorem 4.2 the conditions
${\cal D}^2(X,N)=0$ and $h^2(X,Y)=0$ are equivalent which completes the proof.
\end{proof}
{\bf Structure equations of the submanifolds $(M,g)$ and $(M,\widetilde g)$}.
\par
Denote by $\overline R, R$ and $R^\bot $  the curvature tensors of type $(1,3)$ of $\overline \nabla $, $\nabla $ and
$D$, respectively. We define the curvature tensors $(R^\bot )^1$ and $(R^\bot )^2$ of $\nabla ^1$ and $\nabla ^2$ by 
$(R^\bot )^1(X,Y,N)=\nabla ^1_X\nabla ^1_YN-\nabla ^1_Y\nabla ^1_XN-\nabla ^1_{[X,Y]}N$ and
$(R^\bot )^2(X,Y,W)=\nabla ^2_X\nabla ^2_YW-\nabla ^2_Y\nabla ^2_XW-\nabla ^2_{[X,Y]}W$. Then by straightforward
calculations using (\ref{3.30}) and Lemma 4.1 we obtain
\begin{equation}
\begin{array}{l}
\overline R(X,Y,Z)=R(X,Y,Z)-A(h^2(Y,Z),X)+A(h^2(X,Z),Y) \\
\qquad \qquad \quad +\overline J\left(A(\overline Jh^2(Y,Z),X)-A(\overline Jh^2(X,Z),Y)\right) \\
\qquad \qquad \quad +(\nabla _Xh^2)(Y,Z)-(\nabla _Yh^2)(X,Z),
\end{array}
\label{3.45}
\end{equation}
\begin{equation}
\begin{array}{l}
\overline R(X,Y,W)=(R^\bot )^2(X,Y,W)-h^2(X,A_WY)+h^2(Y,A_WX)\\
\qquad \qquad \quad \, \, +\overline J\left(h^2(X,A_{\overline JW}Y)-h^2(Y,A_{\overline JW}X)\right)-(\nabla _XA)(W,Y)\\
\qquad \qquad \, \,  +(\nabla _YA)(W,X)+\overline J\left((\nabla _XA)(\overline JW,Y)-(\nabla _YA)(\overline JW,X)\right),
\end{array}
\label{3.46}
\end{equation}
where 
\begin{equation*}
\begin{array}{ll}
(\nabla _Xh^2)(Y,Z)=\nabla ^2_Xh^2(Y,Z)-h^2(\nabla _XY,Z)-h^2(Y,\nabla _XZ), \\
(\nabla _XA)(W,Y)=\nabla _XA(W,Y)-A\left(\nabla ^2_XW,Y\right)-A\left(W,\nabla _XY\right).
\end{array}
\end{equation*}
For convenience we denote $A_VX$ by $A(V,X)$ where it is necessary. The equality 
$\overline \nabla _{\overline X}\overline JY=\overline J\overline \nabla _{\overline X}\overline Y$ implies
\begin{equation}
\overline R(\overline X,\overline Y,\overline J\overline Z)=
\overline J\overline R(\overline X,\overline Y,\overline Z).
\label{3.46'}
\end{equation}
Hence, from (\ref{3.45}) we have
\begin{equation}
\begin{array}{l}
\overline R(X,Y,\overline JZ)=\overline JR(X,Y,Z)-\overline J\left(A(h^2(Y,Z),X)-A(h^2(X,Z),Y)\right) \\
\qquad \qquad \quad -A(\overline Jh^2(Y,Z),X)+A(\overline Jh^2(X,Z),Y) \\
\qquad \qquad \quad +\overline J\left((\nabla _Xh^2)(Y,Z)-(\nabla _Yh^2)(X,Z)\right).
\end{array}
\label{3.47}
\end{equation}
Now, using $D_XN=D^1_XN+D^2_XN=\nabla ^1_XN+{\cal D}^2(X,N)$, \,  $D_XW=D^1_XW+D^2_XW=
{\cal D}^1(X,W)+\nabla ^2_XW$ and Lemma 4.1 we compute
\begin{equation}
\begin{array}{l}
R^\bot (X,Y,\overline JZ)=D_XD_Y\overline JZ-D_YD_X\overline JZ-D_{[X,Y]}\overline JZ \\
\qquad \qquad \qquad \, =\overline JR(X,Y,Z)+\overline J\left((\nabla _Xh^2)(Y,Z)-(\nabla _Yh^2)(X,Z)\right)\\
\qquad \qquad \qquad \, -\overline J\left(A(h^2(Y,Z),X)-A(h^2(X,Z),Y)\right),
\end{array}
\label{3.48}
\end{equation}
\begin{equation}
\begin{array}{l}
R^\bot (X,Y,W)=D_XD_YW-D_YD_XW-D_{[X,Y]}W \\
\qquad \quad  =(R^\bot )^2(X,Y,W)+\overline J\left((\nabla _XA)(\overline JW,Y)-(\nabla _YA)
(\overline JW,X)\right)\\
\qquad \quad  -\overline J\left(h^2(Y,A(\overline JW,X))-h^2(X,A(\overline JW,Y))\right).
\end{array}
\label{3.49}
\end{equation}
From (\ref{3.43}) and (\ref{3.44}) it follows $\overline R=\overline {\widetilde R},R=\widetilde R,
(R^\bot )^1=R^l, (R^\bot )^2=R^s$. Thus the equalities (\ref{3.45}), (\ref{3.46}), (\ref{3.46'}), (\ref{3.47}) 
are valid if we replace $\overline R, R, (R^\bot )^2, A_W, h^2$ by $\overline {\widetilde R}, \widetilde R, R^s,
\widetilde {A}_W, h^s$, respectively. We note that $\overline {\widetilde R}(X,Y,Z),
\overline {\widetilde R}(X,Y,\overline JZ), \overline {\widetilde R}(X,Y,W)$ can be obtained from \cite{D-B}
\, (page 175) by using  (\ref{3.44}) and Lemma 4.1. Taking into account (\ref{3.39}) and (\ref{3.42}) we have
\begin{equation}
\overline JR(X,Y,Z)=(R^\bot )^1(X,Y,\overline JZ)=R^l(X,Y,\overline JZ)=\overline J\widetilde R(X,Y,Z)
\label{3.50}
\end{equation}
and
\begin{equation*}
\overline J(R^\bot )^2(X,Y,W)=(R^\bot )^2(X,Y,\overline JW)=R^s(X,Y,\overline JW)=\overline JR^s(X,Y,W).
\end{equation*}
It is known that the Levi-Civita connection $\overline \nabla $ (resp. $\overline {\widetilde \nabla }$) is said
to be flat if $\overline R=0$ (resp. $\overline {\widetilde R}=0$). Also, if $R^\bot =0$ then the normal connection
$D$ is said to be flat. Analogously, we say that $\nabla ^1, \nabla ^2, \nabla ^l, \nabla ^s$ are flat if the
corresponding curvature tensors $(R^\bot )^1, (R^\bot )^2, R^l, R^s$ are equal to zero. As a corollary of 
(\ref{3.50}) we state
\begin{proposition}
For the submanifolds $(M,g)$ and $(M,\widetilde g)$ of a Kaehler manifold with Norden metric $\overline M$ the
following assertions are equivalent:
$1) \nabla $ is flat; \quad 
$2) \widetilde \nabla $ is flat; \quad
$3) \nabla ^1$ is flat; \quad
$4) \nabla ^l$ is flat.
\end{proposition}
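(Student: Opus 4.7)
The plan is to deduce all four equivalences essentially from the chain of identities (\ref{3.50}), exploiting the fact that $\overline J$ is a pointwise bundle isomorphism. Indeed, $\overline J^2=-I$ implies $\overline J$ is injective on $T\overline M$, and since $(M,g)$ is totally real with $\overline J(TM)=\ltr(TM)$, the map $Z\mapsto \overline JZ$ is an $\mathcal{F}(M)$-linear bijection between $\Gamma(TM)$ and $\Gamma(\ltr(TM))$.

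First I would address 1) $\Longleftrightarrow$ 2): taking the outermost equality in (\ref{3.50}), namely $\overline J R(X,Y,Z)=\overline J\widetilde R(X,Y,Z)$, and cancelling $\overline J$ by its injectivity yields $R=\widetilde R$ as tensors. Hence vanishing of one is equivalent to vanishing of the other. Next, for 3) $\Longleftrightarrow$ 4) I would use the middle equality $(R^\bot)^1(X,Y,\overline JZ)=R^l(X,Y,\overline JZ)$. Since $\overline JZ$ sweeps out all of $\Gamma(\ltr(TM))$ as $Z$ ranges over $\Gamma(TM)$, and both $(R^\bot)^1$ and $R^l$ are defined exactly on this bundle, one obtains $(R^\bot)^1=R^l$, so $\nabla^1$ is flat iff $\nabla^l$ is flat.

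Finally, for 1) $\Longleftrightarrow$ 3) I would combine the first equality of (\ref{3.50}), $\overline J R(X,Y,Z)=(R^\bot)^1(X,Y,\overline JZ)$, with the same two observations: injectivity of $\overline J$ shows $R(X,Y,Z)=0$ iff $\overline J R(X,Y,Z)=0$, and surjectivity of $Z\mapsto \overline JZ$ onto $\Gamma(\ltr(TM))$ shows that the right-hand side vanishes for all $(X,Y,Z)$ iff $(R^\bot)^1$ vanishes identically. Closing the loop with the previously established equivalences proves the proposition.

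There is essentially no genuine obstacle here, since identity (\ref{3.50}) — which was established using Lemma 4.1 together with (\ref{3.39}), (\ref{3.42}) and the Kaehler reduction (\ref{3.44}) — already encodes the content of the proposition. The only point requiring a small amount of care is bookkeeping the bundles on which each curvature tensor is defined: $R$ and $\widetilde R$ act with their third argument in $TM$, whereas $(R^\bot)^1$ and $R^l$ act with their third argument in $\ltr(TM)=\overline J(TM)$, and one must verify that the substitution $Z\mapsto \overline JZ$ indeed parametrises the whole space over which these connection curvatures must vanish. This is immediate from $\overline J(TM)=\ltr(TM)$ and the bijectivity of $\overline J$, so the argument reduces to a clean cancellation.
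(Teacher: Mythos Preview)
Your proposal is correct and follows exactly the paper's approach: the proposition is stated there simply ``as a corollary of (\ref{3.50}),'' and you have just spelled out the routine cancellation of $\overline J$ and the bijection $TM\to \overline JTM={\ltr}(TM)$ that makes the corollary immediate. Your added bookkeeping about the domains of the third argument is the only point that needed checking, and you handled it correctly.
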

From (\ref{3.45}) we obtain the equation of Gauss and Codazzi
\begin{equation*}
\overline R(X,Y,Z,U)=R(X,Y,Z,U)-\overline g\left(A(h^2(Y,Z),X),U\right)+\overline g\left(A(h^2(X,Z),Y),U\right)
\end{equation*}
and
\begin{equation*}
\begin{array}{l}
(\overline R(X,Y,Z))^\bot =(\nabla _Xh^2)(Y,Z)-(\nabla _Yh^2)(X,Z)\\
\qquad \qquad \qquad \, \, +\overline J\left(A(\overline Jh^2(Y,Z),X)-
A(\overline Jh^2(X,Z),Y)\right).
\end{array}
\end{equation*}
The equality (\ref{3.46'}) and the fact that $\overline J$ is an anti-isometry with respect to $\overline g$ imply
$\overline R(\overline X,\overline Y,\overline Z,\overline U)=\overline g\left(\overline R(\overline X,\overline Y,\overline Z),\overline U\right)$
is a Kaehler tensor, i.e.
\begin{equation}
\overline R(\overline X,\overline Y,\overline J\overline Z,\overline J\overline U)=
-\overline R(\overline X,\overline Y,\overline Z,\overline U).
\label{3.51}
\end{equation}
Using (\ref{3.51}), (\ref{3.48}), (\ref{3.49}) and (\ref{3.32}) for the equation of Ricci we get
\begin{equation*}
\begin{array}{lll}
\overline g\left(\overline R(X,Y,\overline JZ),\overline JU\right)=-\overline R(X,Y,Z,U), \\
\overline g\left(\overline R(X,Y,\overline JZ),W\right)=\overline g\left((\nabla _Xh^2)(Y,Z)-
(\nabla _Yh^2)(X,Z),\overline JW\right),\\
\overline g\left(\overline R(X,Y,W),W^\prime \right)=\overline g\left((R^\bot )^2(X,Y,W),W^\prime \right)-
\overline g\left(A(\overline JW^\prime ,Y),A(\overline JW,X)\right)\\
+\overline g\left(A(\overline JW^\prime ,X),A(\overline JW,Y)\right)+\overline g\left(A_WX,A_{W^\prime }Y\right)-
\overline g\left(A_{W^\prime }X,A_WY\right).
\end{array}
\end{equation*}
For the curvature tensor $\overline {\widetilde R}$ of type $(0,4)$ we have
\begin{equation}
\overline {\widetilde R}(\overline X,\overline Y,\overline Z,\overline U)=
\overline {\widetilde g}(\overline {\widetilde R}(\overline X,\overline Y,\overline Z),\overline U)=
\overline R(\overline X,\overline Y,\overline Z,\overline J\overline U).
\label{3.52}
\end{equation}
By direct calculations we check that $\overline {\widetilde R}$ is a Kaehler tensor, too. The structure equations
of $\overline {\widetilde R}$ we obtain using (\ref{3.32}), (\ref{3.44}), (\ref{3.52}) and equations of Gauss, 
Codazzi and Ricci
\begin{equation*}
\begin{array}{lllll}
\overline {\widetilde R}(X,Y,Z,N)=\overline {\widetilde g}\left(\widetilde R(X,Y,Z),N\right)-
\overline {\widetilde g}\left(\widetilde A(h^s(Y,Z),X),N\right) \\
\qquad \qquad \qquad \, \, +\overline {\widetilde g}\left(\widetilde A(h^s(X,Z),Y),N\right), \\
\overline {\widetilde R}(X,Y,N^\prime ,N)=\overline {\widetilde g}\left(\widetilde
A(\overline Jh^s(Y,\overline JN),X),N^\prime \right)
-\overline {\widetilde g}\left(\widetilde 
A(\overline Jh^s(X,\overline JN),Y),N^\prime \right), \\
\overline {\widetilde R}(X,Y,W,N)=\overline {\widetilde g}\left((\widetilde {\nabla }_Y\widetilde A)(W,X),N\right)
-\overline {\widetilde g}\left((\widetilde {\nabla }_X\widetilde A)(W,Y),N\right),\\
\overline {\widetilde R}(X,Y,Z,U)=\overline {\widetilde g}\left(h^s(Y,U),h^s(X,Z)\right)-
\overline {\widetilde g}\left(h^s(X,U),h^s(Y,Z)\right), \\
\overline {\widetilde R}(X,Y,W,U)=\overline {\widetilde g}\left((\widetilde {\nabla }_Yh^s)(X,U),W\right)
-\overline {\widetilde g}\left((\widetilde {\nabla }_Xh^s)(Y,U),W\right),\\
\overline {\widetilde R}(X,Y,W^\prime ,W)=\overline {\widetilde g}\left(R^s(X,Y,W^\prime ),W\right)-
\overline {\widetilde g}\left(\widetilde A_WY,\overline J\widetilde A_{\overline JW^\prime }X\right)\\
\qquad \quad +\overline {\widetilde g}\left(\widetilde A_WX,\overline J\widetilde A_{\overline JW^\prime }Y\right)-
\overline {\widetilde g}\left(\widetilde A_{W^\prime }X,\overline J\widetilde A_{\overline JW}Y\right)+
\overline {\widetilde g}\left(\widetilde A_{W^\prime }Y,\overline J\widetilde A_{\overline JW}X\right), \\
\overline {\widetilde g}\left(\widetilde R(X,Y,Z),N\right)+\overline {\widetilde g}\left(R^l(X,Y,N),Z\right)=0.
\end{array}
\end{equation*}

\section{Examples}
According to the obtained results in Section 3, each from the constructed examples below is an example for a
submanifold of an almost complex manifold with Norden metric which is non-degenerate with respect to the one
Norden metric and lightlike with respect to the other.
\begin{example}
We consider the Lie group $GL(2;\mathbb{R})$ with a Lie algebra $gl(2;\mathbb{R})$. 
The real Lie algebra $gl(2;\mathbb{R})$ is spanned by
the left invariant vector fields $\{X_1,X_2,X_3,X_4\}$ where we set
\[
\hspace*{-4mm}
X_1=\left(\begin{array}{ll}
1 & 0
\cr 0 & 0 
\end{array}\right) , \, \,
X_2=\left(\begin{array}{ll}
0 & 1
\cr 0 & 0 
\end{array}\right) , \, \,
X_3=\left(\begin{array}{ll}
0 & 0
\cr 1 & 0 
\end{array}\right) , \, \,
X_4=\left(\begin{array}{ll}
0 & 0
\cr 0 & 1 
\end{array}\right) .
\]
We define an almost complex structure $\overline J$ and a left invariant metric $\overline g$ on $gl(2;\mathbb{R})$ by
\begin{equation}
\overline JX_1=X_4, \quad \overline JX_2=X_3, \quad \overline JX_3=-X_2, \quad \overline JX_4=-X_1
\label{3.53'}
\end{equation}
and
\begin{equation}
\begin{array}{ll}
\overline g(X_i,X_i)=-\overline g(X_j,X_j)=-1; \quad i=1,3; \quad j=2,4;\\
\overline g(X_i,X_j)=0; \quad i\neq j; \quad i,j=1,2,3,4.
\end{array}
\label{3.53''}
\end{equation}
Using (\ref{3.53'}) and (\ref{3.53''}) we check that the metric $\overline g$ is a Norden metric and consequently
$(GL(2;\mathbb{R}),\overline J,\overline g,\overline {\widetilde g})$ is a 4-dimensional almost complex
manifold with Norden metric. The real special linear group $SL(2;\mathbb{R})=\{A\in GL(2;\mathbb{R}): det(A)=1\}$ 
is a Lie subgroup of $GL(2;\mathbb{R})$ with a Lie algebra $sl(2;\mathbb{R})$ of all $(2\times 2)$ real traceless matrices. Thus
$SL(2;\mathbb{R})$ is a 3-dimensional submanifold of $GL(2;\mathbb{R})$ and $sl(2;\mathbb{R})$ is a subalgebra of
$gl(2;\mathbb{R})$ spanned by $\{X_1-X_4, X_2, X_3\}$. We find that the normal space $sl(2;\mathbb{R})^\bot $
is spanned by $\{X_1, X_4\}$. Hence $sl(2;\mathbb{R})\cap sl(2;\mathbb{R})^\bot =\{X_1-X_4\}$, i.e. 
$(SL(2;\mathbb{R}),g)$ is an 1-dimensional lightlike submanifold with ${\Rad} sl(2;\mathbb{R})=
span\left\{\xi =\displaystyle{\frac{X_1-X_4}{\sqrt{2}}}\right\}$. As ${\dim} {\Rad} sl(2;\mathbb{R})=
{\codim} SL(2;\mathbb{R})=1$ it follows that $(SL(2;\mathbb{R}),g)$ is a coisotropic submanifold of $GL(2;\mathbb{R})$.
The screen distribution $S(sl(2;\mathbb{R}))$ is spanned by $\{X_2, X_3=\overline JX_2\}$ which means that
$S(sl(2;\mathbb{R}))$ is holomorphic with respect to $\overline J$. Choose $sl(2;\mathbb{R})^\bot =span\left\{\xi , 
\displaystyle{\frac{-X_1-X_4}{\sqrt{2}}}\right\}$ we have ${\ltr}(sl(2;\mathbb{R}))=span\left\{ 
\displaystyle{\frac{-X_1-X_4}{\sqrt{2}}}\right\}$. Using (\ref{3.53'}) we check that $\overline J{\Rad} sl(2;\mathbb{R})
=\overline J\xi ={\ltr}(sl(2;\mathbb{R}))$. Thus $(SL(2;\mathbb{R}),g)$ is a coisotropic Radical transversal lightlike
submanifold of $GL(2;\mathbb{R})$ and from Corollary 3.1 it follows that $(SL(2;\mathbb{R}),\widetilde g)$ is a
generic submanifold.

\end{example}
Now, we consider the Lie group $GL(2;\mathbb{C})$ with its real 8-dimensional Lie algebra $gl(2;\mathbb{C})$. We 
define a complex structure $\overline J$ on $gl(2;\mathbb{C})$ by $\overline J\overline X=i\overline X$ for any left invariant vector field $\overline X\in gl(2;\mathbb{C})$. Hence we have $[\overline J\overline X,\overline Y]
=\overline J[\overline X,\overline Y]$, i.e.
$\overline J$ is a bi-invariant complex structure. The Lie algebra $gl(2;\mathbb{C})$ is spanned by
the left invariant vector fields $\{X_1,X_2,\ldots ,X_8\}$ where we set
\[
\hspace*{-4mm}
X_1=\left(\begin{array}{ll}
1 & 0
\cr 0 & 0 
\end{array}\right) , \, \,
X_3=\left(\begin{array}{ll}
0 & 1
\cr 0 & 0 
\end{array}\right) , \, \,
X_5=\left(\begin{array}{ll}
0 & 0
\cr 1 & 0 
\end{array}\right) , \, \,
X_7=\left(\begin{array}{ll}
0 & 0
\cr 0 & 1 
\end{array}\right) ,
\]
$X_2=\overline JX_1, \qquad X_4=\overline JX_3, \qquad X_6=\overline JX_5, \qquad X_8=\overline JX_7$.
\par
We define a left invariant metric $\overline g$ on $gl(2;\mathbb{C})$ by
\begin{equation}
\begin{array}{ll}
\overline g(X_i,X_i)=-\overline g(X_j,X_j)=1; \quad i=2,3,5,8; \quad j=1,4,6,7; \\
\overline g(X_i,X_j)=0; \quad i\neq j; \quad i,j=1,2,\ldots ,8.
\end{array}
\label{3.53}
\end{equation}
It is easy to check that the so defined metric $\overline g$ is a Norden metric on $gl(2;\mathbb{C})$. Thus 
$(GL(2;\mathbb{C}),\overline J,\overline g,\overline {\widetilde g})$ is an 8-dimensional almost complex
manifold with Norden metric. Since the metric $\overline g$ is left invariant, for the Levi-Civita
connection $\overline \nabla $ of $\overline g$ we have
\begin{equation}
2\overline g(\overline \nabla _{\overline X}\overline Y,\overline Z)=\overline g
([\overline X,\overline Y],\overline Z)+\overline g([\overline Z,\overline X],\overline Y)+
\overline g([\overline Z,\overline Y],\overline X)
\label{3.54}
\end{equation}
for any $\overline X, \overline Y, \overline Z\in gl(2;\mathbb{C})$.
Using (\ref{3.54}) and $\overline J$ is bi-invariant we get $F(\overline X,\overline Y,\overline Z)=0$. 
Consequently, $(GL(2;\mathbb{C}),\overline J,\overline g,\overline {\widetilde g})$ is a Kaehler manifold
with Norden metric.
\begin{example}
The unitary group $U(2)=\left\{A\in GL(2;\mathbb{C}): A^{-1}=\overline A^T\right\}$ is a Lie group which is a Lie
subgroup of $GL(2;\mathbb{C})$. Hence $U(2)$ is a submanifold of $GL(2;\mathbb{C})$. The real Lie algebra
$u(2)$ which consists of all $(2\times 2)$ skew Hermitian matrices, i.e.
$u(2)=\left\{A\in gl(2;\mathbb{C}): \overline A+A^T=0\right\}$ is the Lie algebra of $U(2)$ and it is a 
4-dimensional subalgebra of $gl(2;\mathbb{C})$. We have $u(2)=span\left\{F_1=X_2, \, \,F_2=X_3-X_5, \, \,F_3=
X_4+X_6, \, \, F_4=X_8\right\}$. Denote by $g$ and $\widetilde g$ the induced metrics on $u(2)$ 
of $\overline g$ and $\overline {\widetilde g}$, respectively.
Using (\ref{3.53}) we calculate $g(F_1,F_1)=g(F_4,F_4)=1$; \, $g(F_2,F_2)=-g(F_3,F_3)=2$; \,  
$g(F_i,F_j)=0, \, i\neq j$, $i,j=1,2,3,4$. Therefore $(U(2),g)$ is a 4-dimensional non-degenerate submanifold of $GL(2;\mathbb{C})$. Moreover $g$ is a Lorentz metric. We find that the normal space $u(2)^\bot $ is spanned by 
$\{\overline JF_1,\overline JF_2,\overline JF_3,\overline JF_4\}$ which means that $(U(2),g)$ is a Lagrangian
submanifold of $GL(2;\mathbb{C})$. From Theorem 3.3 it follows that $(U(2),\widetilde g)$ is a totally lightlike 
submanifold of $GL(2;\mathbb{C})$ such that $\overline J({\Rad} u(2))=\overline Ju(2)={\ltr}(u(2))$.
\end{example}

\begin{example}
The special unitary group $SU(2)=U(2)\bigcap SL(2;\mathbb{C})=\{A\in GL(2;\mathbb{C}): A^{-1}=\overline  A ^T, \, {\det}(A)=1\}$ is a Lie group which is a Lie subgroup of $GL(2;\mathbb{C})$. Hence $SU(2)$ is a submanifold of $GL(2;\mathbb{C})$. The real Lie algebra $su(2)$ which consists of all $(2\times 2)$ traceless skew Hermitian  
matrices, i.e. $su(2)=\{A\in gl(2;\mathbb{C}): \overline A+A^T=0, \, \, {\tr} A=0\}$ is the Lie algebra of $SU(2)$
and it is a 3-dimensional subalgebra of $gl(2;\mathbb{C})$. We have $su(2)=span\left\{\xi _1=X_2-X_8, \, \,
\xi _2=X_3-X_5, \, \,\xi _3=X_4+X_6\right\}$. Using (\ref{3.53}) we calculate 
$g(\xi _1,\xi _1)=g(\xi _2,\xi _2)=-g(\xi _3,\xi _3)=2$, \, 
$g(\xi _i,\xi _j)=0, \, i\neq j$, $i,j=1,2,3$. Therefore $(SU(2),g)$ is a 3-dimensional non-degenerate submanifold of $GL(2;\mathbb{C})$. Moreover $g$ is a Lorentz metric. We find that the normal space $su(2)^\bot $ is spanned by 
$\left\{\overline J\xi _2,\overline J\xi _3,X_1,X_7,X_2+X_8\right\}$. As $\overline J\xi _1=-X_1+X_7$ belongs to $su(2)^\bot $
it follows that $\overline Jsu(2)\subset su(2)^\bot $. Hence $(SU(2),g)$ is a totally real submanifold
of $GL(2;\mathbb{C})$. The left invariant vector fields $\xi _4=X_1+X_7$ and $\overline J\xi _4=X_2+X_8$ belong to
$su(2)^\bot $. By direct calculations we check that $\overline g(\overline J\xi _i,\overline J\xi _j)=
\overline g(\overline J\xi _i,\xi _4)=0, \, \, i\neq j, \, \, i,j=1,2,3,4$. Thus $\left\{\overline J\xi _1,
\overline J\xi _2,\overline J\xi _3,\xi _4,\overline J\xi _4\right\}$ is an orthogonal basis of $su(2)^\bot $
with respect to $\overline g$. It is clear that the complementary orthogonal vector subspace $(\overline Jsu(2))^\bot $ of $\overline Jsu(2)$ in $su(2)^\bot $ is spanned by $\left\{\xi _4,\overline J\xi _4\right\}$ and consequently
it is holomorphic. Now, from Theorem 3.3 it follows that $(SU(2),\widetilde g)$ is an isotropic submanifold of
$GL(2;\mathbb{C})$ such that $\overline J({\Rad} su(2))=\overline Jsu(2)={\ltr}(su(2))$ and 
$S(su(2)^{\widetilde \bot })=(\overline Jsu(2))^\bot $.
\end{example}

\end{document}